\numberwithin{equation}{section}
\numberwithin{table}{section}
\numberwithin{figure}{section}
\newcommand{\ith}{i\textsuperscript{th} }
\title{Tridiagonal Matrices and Boundary Conditions}
\author{J.J.P. Veerman \thanks{Maseeh Department of Mathematics \& Statistics, Portland State
University, Portland, OR 97201, USA}  
\and
David K. Hammond \thanks{Oregon Institute of Technology, Wilsonville, OR 97070}}
\begin{document}


\maketitle

\begin{abstract}
  We describe the spectra of certain tridiagonal matrices arising from
  differential equations commonly used for modeling flocking
  behavior. In particular we consider systems resulting from allowing
  an arbitrary boundary condition for the end of a one dimensional
  flock.  We apply our results to demonstrate how asymptotic stability
  for consensus and flocking systems depends on the imposed boundary
  condition.
\end{abstract}

\maketitle

\section{Introduction}
\label{chap:intro}

The $n+1$ by $n+1$ tridiagonal matrix
$$
A_{n+1}=\left(\begin{array}{cccccc}
b & 0 & 0 & & \\
a & 0 & c & 0 & \\
0 & a & 0 & c & \\
 & & \ddots & \ddots &  \ddots & \\
 & & & a & 0 & c \\
 & & & & a+e & d
\end{array}\right),
$$
is of special interest in many (high-dimensional) problems with \emph{local} interactions and
internal translation symmetry but with no clear preferred rule for the boundary condition.
We are interested in the spectrum and associated eigenvectors of this matrix.
In particular in Section \ref{chap:spectrum} we study how the spectrum
depends on choices for the boundary conditions implied by $d$ and $e$.

We will pay special attention to the following important subclass of these systems.

\begin{definition} \emph{If  $b=a+c$ and $c=e+d$, the matrix $A$ is called decentralized.}
\label{def:decentralized}
\end{definition}


One of the main applications of these matrices arises in the analysis
of first and second order systems of ordinary differential equations
in $\mathbb{R}^{n+1}$ such as

\begin{align}
\dot x = -L(x-h) &, \quad {\rm and } \label{eq:first-order}\\
 \ddot x=-\alpha L(x-h) -\beta L\dot x &.
\label{eq:second-order}
\end{align}

Here $L$ is the so-called directed graph Laplacian
(e.g. \cite{flocks2}), given by $L=D-A$, where $D$ is a diagonal
matrix with \ith entry given by the \ith row sum of $L$.
In the decentralized case $D=(a+c)I$, and $L$ is given simply by
\begin{equation}
L=bI-A=(a+c)I-A
\label{eq:Laplacian}
\end{equation}
In (\ref{eq:second-order}) $\alpha$ and $\beta$ are real numbers, and
$h$ is a constant vector with components $h_k$. Upon substitution of
$z\equiv x-h$, the fixed points of Equations \ref{eq:first-order} and
\ref{eq:second-order} are moved to the origin. 

It is easy to prove that the systems in Equations
\ref{eq:first-order} and
\ref{eq:second-order}
admit the solutions
\begin{equation}
x_k= x_0+h_k \quad {\rm and } \quad x_k= v_0t+x_0+h_k
\label{eq:coherent solutions}
\end{equation}
(for the first order system and the second order system, respectively) for arbitrary reals
$x_0$ and $v_0$ if and only if the system is decentralized.

The first order system given above is a simple model used to study
`consensus', while the second system models a simple instance of
`flocking' behavior. The latter is also used to study models for
automated traffic on a single lane road. The interpretation $d$ and
$e$ as specifying boundary conditions for these models can be
understood as follows. Following the transformation $z\equiv x-h$, we
may consider $z_k(t)$, for $0\leq k \leq n$, as the transformed
positions of the $n+1$ members of a ``flock''. Here $z_0(t)$ denotes
the transformed position of the \emph{leader}, the model
\ref{eq:first-order} then specifies that $\dot z_0 = 0$, $\dot z_k =
a(z_{k-1}-z_k) - c(z_k-z_{k+1})$ for $1\leq k < n$, and that $\dot z_n
= (a+e)z_{n-1} + dz_n$. The terms $a(z_{k-1} - z_k)$ and $-c
(z_k-z_{k+1})$ may be interpreted as control signals that are
proportional to the displacement of $z_k$ from its predecessor
$z_{k-1}$, and from its successor $z_{k+1}$. The equation giving $\dot
z_n$ is different as $z_n$ has no successor, selection of the boundary
condition consists of deciding what the behavior governing $z_n$
should be.  In the decentralized case, eliminating $d$ shows that
$\dot z_n = (a+e)(z_{n-1} - z_n)$, so that $\dot z_n$ is proportional
to the difference from its predecessor, and $e$ may be interpreted as
the additional amount of the proportionality constant due to boundary
effects. Interpretation of $d$ and $e$ for the second order system in
\ref{eq:second-order} is similar.
These problems are important examples of a more general class of
problems where oscillators are coupled according to some large
communication graph, and one wants to find out whether and how fast
the system synchronizes. The asymptotic stability of both systems is
discussed in Section \ref{chap:differential equations}.

One of the main motivations for this work came from earlier work
(\cite{Cantos2}) that led to the insight that in some important cases
changes of boundary conditions did not give rise to appreciable
changes in the dynamics of these systems (if the dimension was
sufficiently high).  This somewhat surprising discovery motivated the
current investigation into how eigenvalues change as a function of the
boundary condition. Indeed Corollaries \ref{cory:first-order} and
\ref{cory:second-order} corroborate that at least the asymptotic
stability of consensus systems and flock-formation systems is
unchanged for a large range of boundary conditions.

The method presented here relies on the observation that the
eigenvalue equation for $A$ can be rewritten as a two-dimensional
recursive system with appropriate boundary conditions. This procedure
was first worked out in \cite{FV}.  Here we give a considerably
refined version of that argument, that allows us to draw more general
conclusions. These conclusions are presented in Theorems
\ref{thm:-a.e.a}, \ref{thm:e>a}, and \ref{thm:e<-a}. The spectrum of
tridiagonal matrices has also been considered by Yueh \cite{Yueh} who
relies heavily on \cite{Elliott}.  In that work however the parameter
$e$ is zero, and the emphasis is on analyzing certain isolated cases,
while we attempt to give a comprehensive theory. The inclusion of the
parameter $e$ is necessary for our main application: decentralized
systems.  Related work has also been published by Willms
\cite{Willms2008}, who considered tridiagonal matrices where the
product of sub and super diagonal elements is constant, and Kouachi
\cite{Kouachi2006}, who considered a similar condition where the
product of sub and super diagonal elements alternates between two
values. Both of these conditions exclude the case when $e\neq 0$ in
the current work.


We assume $a$, $b$, $c$, $d$, $e$ to be real. The cases where $a=0$ or $c=0$ are very degenerate.
There are only 1 or 2 non-zero eigenvalues of $A$. We will not further discuss these cases.
That leaves $a>0$ and $c\neq 0$ as the general case to be studied. We will consider
$a>0$ and $c>0$ and will assume this unless otherwise mentioned.

In Section \ref{chap:prelim} we derive a polynomial whose roots will
eventually yield the eigenvalues. In the next Section we find the value of those roots.
Then in Section \ref{chap:spectrum} we use those results to characterize the spectrum
of $A$. In Section \ref{chap:decentralized} we apply this to the matrices associated with
decentralized systems. In Section \ref{chap:differential equations} we discuss the
consequences for the asymptotic stability of decentralized systems of ordinary differential
equations.

\section{Preliminary Calculations}
\label{chap:prelim}

We start by noting that $A_{n+1}$ is block lower triangular. One block has dimension 1 and eigenvalue
$b$. The other has dimension $n$. We begin by analyzing the eigenvector associated to this
eigenvalue.

\begin{proposition}
\emph{\bf i)}
If $b^2-4ac\neq 0$, then the eigenvector of $A$ associated to the eigenvalue
$b$ is $(v_0,\cdots v_n)$ where
\begin{equation}
v_k=x_+^k+c_-x_-^k
\end{equation}
and
\begin{equation}
x_{\pm}=\dfrac{1}{2c} \left( b \pm\sqrt{b^2-4ac}\right)
 \quad \textrm{and} \quad
 c_-=-\left(\dfrac{c}{a}\right)^n x_+^{2n-1}\,\dfrac{(a+e)+(d-b)x_+}{(c+e/a)x_++(d-b)}
\end{equation}
\emph{\bf ii)}
This associated eigenvector is the constant vector if and only $A$ is decentralized.\\
\emph{\bf iii)}
If $b^2-4ac=0$ the eigenvalues of $A$ can be found as limits of the eigenvalues
of case i.
\label{prop:eval=b}
\end{proposition}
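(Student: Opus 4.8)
The plan is to exploit the block-lower-triangular structure already noted: since the first row of $A$ is $(b,0,\dots,0)$, the eigenvalue equation $Av=bv$ imposes no constraint from the zeroth row (it reads $bv_0=bv_0$), so $v_0$ is free, while rows $1$ through $n-1$ all carry a zero diagonal entry and therefore read
\[
a v_{k-1} + c v_{k+1} = b v_k, \qquad 1\le k\le n-1.
\]
First I would treat this as the constant-coefficient recurrence $c v_{k+1}-bv_k+av_{k-1}=0$, whose characteristic polynomial is $c x^2 - bx + a$. When $b^2-4ac\neq 0$ this has the two distinct roots $x_\pm$ displayed in the statement, so the general solution is $v_k=\alpha x_+^k+\beta x_-^k$; normalizing $\alpha=1$ absorbs the scaling freedom of an eigenvector and leaves a single unknown $\beta$, which I will identify with $c_-$.

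Next I would pin down $\beta$ using the one remaining equation, coming from the last row $(\dots,a+e,d)$, namely $(a+e)v_{n-1}+(d-b)v_n=0$. Substituting $v_{n-1}=x_+^{n-1}+c_-x_-^{n-1}$ and $v_n=x_+^{n}+c_-x_-^{n}$ and solving for $c_-$ gives
\[
c_- = -\,\frac{x_+^{\,n-1}\bigl[(a+e)+(d-b)x_+\bigr]}{x_-^{\,n-1}\bigl[(a+e)+(d-b)x_-\bigr]}.
\]
To reach the closed form in the statement I would then use the Vieta relation $x_+x_-=a/c$ (product of the roots of $cx^2-bx+a$) to replace every power of $x_-$ by the corresponding power of $x_+$ through $x_-=a/(cx_+)$; collecting the resulting factors of $c/a$ and clearing the denominator is what produces the prefactor $(c/a)^n x_+^{2n-1}$ together with a denominator that is linear in $x_+$. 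I expect this algebraic consolidation to be the only genuinely fiddly part, and I would also flag the (measure-zero) exceptional case in which $(a+e)+(d-b)x_-$ vanishes, i.e.\ where $b$ is simultaneously an eigenvalue of the interior block.

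For part (ii) I would argue directly rather than through $c_-$: the constant vector $\mathbf{1}=(1,\dots,1)^\top$ satisfies $A\mathbf{1}=b\mathbf{1}$ exactly when the interior rows force $a+c=b$ and the final row forces $a+e+d=b$, and these two equalities together are equivalent to $b=a+c$ and $c=d+e$, i.e.\ to decentralization. Since part (i) shows that the eigenspace for $b$ is one dimensional, the eigenvector is a scalar multiple of $\mathbf{1}$ precisely when $\mathbf{1}$ is itself an eigenvector, which yields the claimed equivalence. Finally, for part (iii) I would invoke continuity: the eigenvalues of $A$ are the roots of $\det(A-\lambda I)$, a polynomial in $\lambda$ whose coefficients depend polynomially, hence continuously, on $a,b,c,d,e$, and the roots of a polynomial depend continuously on its coefficients. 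Because the set $\{b^2-4ac\neq0\}$ is dense, any configuration with $b^2-4ac=0$ is the limit of case-(i) configurations, and the corresponding eigenvalues converge to those of the degenerate matrix, which is exactly the assertion.
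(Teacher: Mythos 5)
Your proposal is correct and takes essentially the same route as the paper: the paper likewise rewrites $Av=bv$ as the two-term recurrence with characteristic roots $x_\pm$ (phrased there via the transfer matrix $C=\left(\begin{smallmatrix}0 & 1\\ -a/c & b/c\end{smallmatrix}\right)$), normalizes $c_+=1$, determines $c_-$ from the boundary row $(a+e)v_{n-1}+(d-b)v_n=0$, proves (ii) by substituting the constant vector into the recursion and boundary condition, and disposes of (iii) by continuity of eigenvalues in the parameters. Your added flag about the degenerate case $(a+e)+(d-b)x_-=0$, where the normalization $c_+=1$ (and hence the displayed formula for $c_-$) breaks down, is a genuine refinement that the paper silently assumes away.
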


\begin{proof}
We first prove i. The equation $Av=bv$ can be rewritten as
\begin{equation}
\forall \; j=1,\ldots, n-1\; : \;
\left(\begin{array}{c} v_j \cr v_{j+1}\end{array}\right)=C^j\left(\begin{array}{c} v_0 \cr
v_{1}\end{array}\right) \quad \mbox{and} \quad
(a+e)v_{n-1}+(d-b)v_n=0
\label{eq:recursion}
\end{equation}
Here the matrix $C$ is defined by
$$C= \left(\begin{array}{cc}
                  0 & 1 \cr
                  -\frac{a}{c} & \frac{b}{c}
\end{array}\right)\; ,
$$
The eigenvalues of $C$ are given by $x_\pm$ in the statement of the Proposition
and its associated eigenvectors are
$\left(\begin{array}{c} 1 \cr x_+\end{array}\right)$ and
$\left(\begin{array}{c} 1 \cr x_-\end{array}\right)$
\footnote{A simple calculation shows that $\left(\begin{smallmatrix} 0 \\ 1 \end{smallmatrix}\right)$ cannot be an eigenvector of $C$}.
Assuming that the eigenvalues $x_\pm$ are distinct, we can write $v_k=c_+x_+^k+c_-x_-^k$.
Of course $c_+$ can be chosen to be 1 without loss of generality.
Now $c_-$ is determined by substituting the expression for $v_k$ in the boundary condition
in Equation \ref{eq:recursion}.

We now prove ii. Substituting $v_j=1$ in Equation \ref{eq:recursion}, we immediately get
that $-a+b=c$ from the matrix equation and $a+e+d-b=0$, which imply the result.

Part iii follows from the fact that eigenvalues are continuous functions of the parameters.
\end{proof}

To facilitate calculations we set
$\tau^2\equiv a/c$, shave off the first row and column of $A_{n+1}$ and thus define the
reduced $n\times n$ matrix $Q_n$:
$$
Q_{n}=\left(\begin{array}{ccccc}
0 & a/\tau^2 & 0 & \\
a & 0 & a/\tau^2 & \\
  & \ddots & \ddots &  \ddots & \\
  & & a & 0 & a/\tau^2 \\
  & & & a+e & d
\end{array}\right),
$$

To find the spectrum of $Q$, we look for a number $r$ and a $n$-vector $v$ forming an eigenpair $(r,v)$ as follows
\begin{equation}
\begin{array}{cccc}
&\dfrac{a}{\tau^2}v_2 &=& rv_1\\
k\in \{2,\cdots, n-1\} & av_{k-1}+\dfrac{a}{\tau^2} v_{k+1} &= & rv_k\\
 & (a+e)v_{n-1}+dv_n &=& rv_n
\end{array}
\label{eq:recursion-general}
\end{equation}

We first collect a number of basic observations that allow us to deduce the main results
of this section.

\begin{lemma} Let $(r,v)$ an eigenpair for the matrix $Q$. Then
\begin{equation}
r=\sqrt{ac}(y+y^{-1}) \quad \mbox{and} \quad v_k=(\tau y)^k-\left(\dfrac{\tau}{y}\right)^k
\label{eq:eigenpair}
\end{equation}
where
\begin{equation}
\begin{array}{cc}
&a(y^{n+1}-y^{-(n+1)})-d\tau(y^n-y^{-n})-e(y^{n-1}-y^{-(n-1)})=0\\[0.2cm]
\mbox{or} & (ay^2-d\tau y-e)y^{n-1}+(ey^2+d\tau y-a)y^{-(n+1)}=0
\end{array}
\label{eq:polynomial}
\end{equation}
Furthermore, if we set $y=e^{i\phi}$ then
\begin{equation}
(e+a)\cos n\phi\,\sin \phi = (d\tau + (e-a)\cos \phi)\sin n\phi
\label{eq:cosine}
\end{equation}
If we assume that $\sin n\phi\neq 0$ then dividing by it gives
\begin{equation}
\cot n\phi \,\sin \phi = \dfrac{d\tau}{e+a}+\dfrac{e-a}{e+a}\cos \phi
\label{eq:cotangent}
\end{equation}
\label{lem:polynomial}
\end{lemma}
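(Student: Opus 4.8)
The plan is to treat the interior equations of \eqref{eq:recursion-general} as a constant-coefficient second-order recursion and to fix the two free constants using the two boundary rows. First I would note that since $\tau^2=a/c$ we have $a/\tau^2=c$, so the middle equations read $a v_{k-1}+c v_{k+1}=r v_k$ for $k=2,\ldots,n-1$. Seeking solutions $v_k=\lambda^k$ gives the characteristic equation $c\lambda^2-r\lambda+a=0$, whose two roots have product $a/c=\tau^2$ and sum $r/c$. I would parametrize the roots as $\tau y$ and $\tau/y$, so that the product condition is automatic; the sum condition $\tau(y+y^{-1})=r/c$ then yields $r=c\tau(y+y^{-1})=\sqrt{ac}\,(y+y^{-1})$, since $c\tau=\sqrt{ac}$. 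This establishes the first formula in \eqref{eq:eigenpair}.

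Next I would determine the correct combination. Assuming the roots are distinct (i.e. $y\neq\pm 1$), the general solution is $v_k=A(\tau y)^k+B(\tau/y)^k$. The top row $c v_2=r v_1$ is exactly the interior recursion at $k=1$ with the convention $v_0=0$; imposing $v_0=0$ forces $A+B=0$, and normalizing $A=1$ gives $v_k=(\tau y)^k-(\tau/y)^k$, the second formula in \eqref{eq:eigenpair}. It remains to substitute this into the bottom row $(a+e)v_{n-1}+d v_n=r v_n$. Writing $v_k=\tau^k(y^k-y^{-k})$ and using $r=\sqrt{ac}\,(y+y^{-1})=(a/\tau)(y+y^{-1})$ together with $c=a/\tau^2$, I would clear the common factor $\tau^{n-1}$ and collect terms, which produces the first line of \eqref{eq:polynomial}. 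Grouping the positive and negative powers of $y$ separately then factors this as the second line.

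Finally I would set $y=e^{i\phi}$, so that $y^m-y^{-m}=2i\sin m\phi$. Dividing the first form of \eqref{eq:polynomial} by $2i$ turns it into $a\sin(n+1)\phi-d\tau\sin n\phi-e\sin(n-1)\phi=0$. Applying the addition formulas $\sin(n\pm 1)\phi=\sin n\phi\cos\phi\pm\cos n\phi\sin\phi$ and regrouping the coefficients of $\cos n\phi\,\sin\phi$ and of $\sin n\phi$ yields \eqref{eq:cosine}; dividing through by $\sin n\phi$ (where nonzero) and by $(e+a)$ gives \eqref{eq:cotangent}.

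I expect the only real subtlety, rather than a genuine obstacle, to be the bookkeeping at the boundaries: verifying that the top row is precisely the interior recursion with a phantom $v_0=0$, and tracking the powers of $\tau$ and the substitution $c=a/\tau^2$ cleanly enough that the bottom row collapses exactly to \eqref{eq:polynomial}. One should also record the degenerate case $y=\pm 1$, where the two characteristic roots coincide and $v_k\equiv 0$, so that such $y$ produce no eigenvector and may be excluded throughout.
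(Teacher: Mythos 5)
Your proposal is correct and follows essentially the same route as the paper: the paper phrases the interior recursion via the transfer matrix $C$ and its eigenvalues $x_\pm=\tau y_\pm$ (trace and determinant giving the sum and product conditions), which is just the matrix form of your characteristic equation $c\lambda^2-r\lambda+a=0$, and both arguments then use $v_0=0$ to force the antisymmetric combination, substitute into the bottom boundary row to get \eqref{eq:polynomial}, and pass to $y=e^{i\phi}$ for \eqref{eq:cosine} and \eqref{eq:cotangent}. Your closing remark about excluding $y=\pm 1$ matches the paper's standing assumption that $x_+\neq x_-$, with that degenerate case handled separately in Lemma \ref{lem:invariants}.
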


\begin{proof} We proceed as in Proposition \ref{prop:eval=b}.
The equation $Qv=rv$ can be rewritten as
\begin{equation}
\begin{array}{c}
\forall \; k\in\{1,\ldots, n\} \; ,
\left(\begin{array}{c} v_k \cr v_{k+1}\end{array}\right)=C^k\left(\begin{array}{c} v_0 \cr
v_{1}\end{array}\right) \\[0.3cm]
\mbox{and} \quad v_0=0 \quad \mbox{and} \quad \dfrac{a}{\tau^2}v_{n+1}-dv_n-ev_{n-1}=0
\end{array}
\label{eq:recursion2}
\end{equation}
Here the matrix $C$ is defined by
$$C= \left(\begin{array}{cc}
                  0 & 1 \cr
                  -\tau^2 & \frac{r\tau^2}{a}
\end{array}\right)\; ,
$$

The eigenvalues of $C$ are given by $x_\pm$ and their sum equals the trace of $C$, or:
\begin{equation}
\frac{r\tau^2}{a}=x_++x_-
\label{eq:trace}
\end{equation}
As before, if we assume that $x_+$ and $x_-$ are distinct, we have $v_k=x_+^k+c_-x_-^k$ but now $c_-$ is determined by the boundary condition $v_0=0$ in Equation \ref{eq:recursion}. This implies that $c_-=-1$. Now set
$x_\pm\equiv \tau y_\pm$. The product $x_-x_+$ equals the determinant of $C$, or $\tau^2$,
and therefore
\begin{equation}
y_-=y_+^{-1}
\label{eq:determinant}
\end{equation}
If we denote $y_+$ by $y$ we obtain Equation \ref{eq:eigenpair}.
The values of $y_\pm$ are now determined
by substituting $v_k$ into the last boundary condition of Equation \ref{eq:recursion2}.
We immediately get Equation \ref{eq:polynomial}.

Finally if we use
\begin{equation*}
2(y^{n+a}-y^{-(n+a)})=(y^{n}-y^{-n})(y^{a}+y^{-a})+
(y^{n}+y^{-n})(y^{a}-y^{-a})
\end{equation*}
and then substitute $y=e^{i\phi}$ in Equation \ref{eq:polynomial} we easily find
Equations \ref{eq:cosine} and \ref{eq:cotangent}.
\end{proof}

\begin{lemma} \emph{\bf i)}
If $1$ is a simple root of Equation \ref{eq:polynomial}, then $2\sqrt{ac}$ is not an eigenvalue. The same holds for $-1$ and $-2\sqrt{ac}$.\\
 \emph{\bf ii)} The set of eigenvalues is invariant under the transformations $\mathrm{inv}:
y\rightarrow y^{-1}$ and $\mathrm{conj}:y\rightarrow \bar{y}$ (the complex conjugate).
\label{lem:invariants}
\end{lemma}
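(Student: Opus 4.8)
The plan for part i) rests on the observation that $r=\pm2\sqrt{ac}$ corresponds precisely to $y=\pm1$, since $r=\sqrt{ac}(y+y^{-1})$ attains the value $\pm2\sqrt{ac}$ only at the double root $y=\pm1$ of the quadratic $y^2-(r/\sqrt{ac})\,y+1=0$ relating $y$ to $r$. At exactly these values the eigenpair formula of Lemma \ref{lem:polynomial} breaks down: the candidate eigenvector $v_k=(\tau y)^k-(\tau/y)^k$ collapses to the zero vector, and the antisymmetry $P(y^{-1})=-P(y)$ of the left-hand side of \ref{eq:polynomial} forces $P(\pm1)=0$ unconditionally. Thus $y=\pm1$ are always formal roots of \ref{eq:polynomial} but do not by themselves certify an eigenvalue; the real content is whether the degenerate value $r=\pm2\sqrt{ac}$ is genuinely an eigenvalue, which I would settle by redoing the recursion in the repeated-root regime.

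First I would treat the case $y=1$. When $r=2\sqrt{ac}$ the companion matrix $C$ of \ref{eq:recursion2} has the single repeated eigenvalue $\tau$ and is a genuine Jordan block, so the general solution of the recursion is $v_k=(c_++c_-k)\tau^k$ rather than a combination of two geometric sequences. Imposing the boundary condition $v_0=0$ forces $c_+=0$, leaving (up to scale) $v_k=k\tau^k$; one checks directly that this sequence already satisfies every equation of \ref{eq:recursion-general} except the last, boundary, one. Substituting it into the remaining relation $\tfrac{a}{\tau^2}v_{n+1}-dv_n-ev_{n-1}=0$ collapses, after dividing by $\tau^{n-1}$, to the single scalar condition $a(n+1)-d\tau n-e(n-1)=0$. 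Hence $r=2\sqrt{ac}$ is an eigenvalue if and only if this linear relation among the parameters holds; the case $y=-1$ is identical with $v_k=k(-\tau)^k$ and yields $a(n+1)+d\tau n-e(n-1)=0$.

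It then remains to recognize these two conditions as multiplicity statements for \ref{eq:polynomial}. Clearing denominators, I would set $\tilde P(y)=y^{n+1}P(y)$, a genuine polynomial of degree $2n+2$ with $\tilde P(0)=-a\neq0$, so that its roots in $\mathbb{C}\setminus\{0\}$ coincide with those of \ref{eq:polynomial} with equal multiplicities; in particular the multiplicity of $\pm1$ is unaffected. A one-line differentiation gives $\tilde P'(\pm1)=\pm2\,[\,a(n+1)\mp d\tau n-e(n-1)\,]$, which is exactly twice the boundary expression found above. Since $\tilde P(\pm1)=0$ always, $y=\pm1$ is a simple root precisely when $\tilde P'(\pm1)\neq0$, i.e. precisely when $r=\pm2\sqrt{ac}$ fails to be an eigenvalue. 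This is the contrapositive of part i). I expect the only real obstacle to be bookkeeping: carrying out the repeated-root solution correctly and matching its boundary condition, sign for sign, to the derivative $\tilde P'(\pm1)$.

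Part ii) is then essentially free from the symmetries already present. The eigenvalue map $r=\sqrt{ac}(y+y^{-1})$ is manifestly invariant under $\mathrm{inv}:y\mapsto y^{-1}$, and $P(y^{-1})=-P(y)$ shows that $\mathrm{inv}$ permutes the roots of \ref{eq:polynomial}; hence $\mathrm{inv}$ preserves the set of eigenvalues. For $\mathrm{conj}$, since $a$, $d$, $e$, $\tau$ are real the coefficients of $P$ are real, so $P(\bar y)=\overline{P(y)}$ and $\bar y$ is a root whenever $y$ is, while $r(\bar y)=\overline{r(y)}$; equivalently, $Q$ is a real matrix and its spectrum is closed under complex conjugation.
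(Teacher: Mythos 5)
Your proposal is correct, but for part i) it takes a genuinely different route from the paper. The paper's argument is a counting/pairing one: it observes that the procedure of Lemma \ref{lem:polynomial} breaks down exactly when $x_+=x_-$, i.e.\ when the discriminant of the characteristic polynomial of $C$ vanishes, which happens precisely for $r=\pm2\sqrt{ac}$ (equivalently $y=\pm1$); it then notes that if the roots at $\pm1$ are simple, the remaining $2n$ roots of Equation \ref{eq:polynomial} form $n$ reciprocal pairs $y,\,y^{-1}$ by Equation \ref{eq:determinant}, each pair producing an eigenvector via Equation \ref{eq:eigenpair}, so these pairs account for the spectrum of $Q$ and, since each such pair has $y\neq\pm1$, no eigenvalue $\sqrt{ac}(y+y^{-1})$ equals $\pm2\sqrt{ac}$. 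You instead analyze the degenerate regime head-on: when $r=\pm2\sqrt{ac}$ the matrix $C$ is a Jordan block with repeated eigenvalue $\pm\tau$, the recursion with $v_0=0$ forces $v_k=k(\pm\tau)^k$ up to scale, and the remaining boundary condition collapses to the scalar relation $a(n+1)\mp d\tau n-e(n-1)=0$; you then verify that this relation is exactly $\tilde P'(\pm1)=0$ for $\tilde P(y)=y^{n+1}P(y)$, so that $\pm1$ is a simple root precisely when $\pm2\sqrt{ac}$ fails to be an eigenvalue. Your sign bookkeeping checks out on both counts ($\tilde P'(1)=2\left[a(n+1)-d\tau n-e(n-1)\right]$ and $\tilde P'(-1)=-2\left[a(n+1)+d\tau n-e(n-1)\right]$). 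What each approach buys: yours is self-contained at the degenerate value, proves strictly more than the lemma asserts (the \emph{equivalence}: $\pm2\sqrt{ac}$ is an eigenvalue if and only if $\pm1$ is a multiple root), and sidesteps the implicit step in the paper's count that the $n$ reciprocal pairs, taken with multiplicity, exhaust the algebraic multiplicities of the spectrum even when \ref{eq:polynomial} has repeated roots away from $\pm1$; the paper's proof is far shorter and dovetails with its global strategy of parametrizing eigenvalues by reciprocal pairs. For part ii) the two arguments essentially coincide: invariance of Equation \ref{eq:polynomial} under $y\mapsto y^{-1}$ (via the antisymmetry $P(y^{-1})=-P(y)$) and under conjugation (real coefficients), combined with the symmetry and equivariance of $r=\sqrt{ac}(y+y^{-1})$, which you spell out and the paper leaves implicit.
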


\begin{proof} i: The procedure followed in the proof of Lemma \ref{lem:polynomial}
does not work if $x_+=x_-$. This happens if the discriminant of the characteristic polynomial
of the matrix $C$ is zero. This is only true if $r$, the eigenvalue of $Q$,
equals $\pm2\sqrt{ac}$, which is equivalent to $y=\pm1$.
On the other hand, if these roots are simple then
Equation \ref{eq:polynomial} has $2n$ other roots, which by Equation \ref{eq:determinant}
form $n$ pairs $y$ and $1/y$ (counting multiplicity). By Equation \ref{eq:eigenpair}
each pair yields an eigenvector.\\
ii: Both transformations leave Equation \ref{eq:polynomial} invariant.
\end{proof}

These two lemmas allow us in specific cases, namely when the polynomial in Equation
\ref{eq:polynomial} factors, to obtain a simple explicit representation of the eigenvalues.
Indeed in Yueh's paper \cite{Yueh} the emphasis is on these special cases.
We give a number of examples that are commonly used in the literature.
The remainder of the paper will then be devoted to obtain more general results.
We note that all three examples are special cases of Theorem \ref{thm:-a.e.a} part 2.

The first example is $c=a$ and $e=d=0$. The polynomial equation \ref{eq:polynomial} factors
as: $y^{2n+2}-1=0$. Thus after applying Lemmas \ref{lem:polynomial}
and \ref{lem:invariants} we see that the eigenvalues for $Q$ are given by
$2a\cos(\frac{\pi k}{n+1})$ for $k\in\{1,\cdots n\}$.
Our other two examples are of decentralized systems which are discussed in more detail
in the Section \ref{chap:decentralized}. The first of these is $c=a$ and $e=0$ and $d=a$.
The polynomial equation now reduces to: $(y-1)(y^{2n+1}+1)=0$. The roots
at $\pm1$ must again be ignored and the eigenvalues of $Q$ are
$2a\cos(\frac{\pi (2k-1)}{2n+1})$ for $k\in\{1,\cdots n\}$.
Finally we consider the case $c=a$ and $e=a$ and $d=0$. The polynomial equation becomes:
$(y^2-1)(y^{2n}+1)=0$. Eliminating $\pm1$ again, we get the eigenvalues
$2a\cos(\frac{\pi (2k-1)}{2n})$ for $k\in\{1,\cdots n\}$.

\section{The Roots of the Polynomial in Equation \ref{eq:polynomial}}
\label{chap:polynomial}

\begin{proposition} If $a+e=0$ then
\begin{equation*}
y_\ell = e^{\frac{\pi i \ell}{n}}\;\mbox{for}\;\ell\in\{1,\cdots n-1\}\; ,\quad
y_n=\frac{1}{2a} \left(d\tau\pm \sqrt{d^2\tau^2-4a^2}\right)
\end{equation*}
Furthermore
\begin{equation*}
\begin{array}{cccc}
1.&2a < d\tau  &\mathrm{then}& y_n>1\\
2.&-2a\leq d\tau \leq 2a &\mathrm{then}& |y_n|=1 \\
3.&d\tau <-2a &\mathrm{then}& y_n<-1
\end{array}
\end{equation*}
\label{prop:a+e=0}
\end{proposition}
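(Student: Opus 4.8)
The plan is to exploit the special structure that the hypothesis $a+e=0$ (i.e. $e=-a$) imposes on the polynomial of Equation \ref{eq:polynomial}. First I would substitute $e=-a$ into the second form of that equation, $(ay^2-d\tau y-e)y^{n-1}+(ey^2+d\tau y-a)y^{-(n+1)}=0$. With $e=-a$ the first bracket becomes $a(y^2+1)-d\tau y$ while the second becomes its negative, $-\left(a(y^2+1)-d\tau y\right)$, so the entire left-hand side factors as
\begin{equation*}
\left(ay^2-d\tau y+a\right)\left(y^{n-1}-y^{-(n+1)}\right)=0 .
\end{equation*}
Multiplying by $y^{n+1}$ shows this is equivalent to $\left(ay^2-d\tau y+a\right)\left(y^{2n}-1\right)=0$. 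Spotting and verifying this clean factorization (in particular the sign cancellation in the second bracket) is the crux of the argument; everything afterward is elementary.

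Next I would read off the roots factor by factor and count carefully. The cyclotomic factor $y^{2n}=1$ has roots $e^{\pi i \ell/n}$ for $\ell\in\{0,\ldots,2n-1\}$. By Lemma \ref{lem:invariants} the inversion $y\mapsto y^{-1}$ identifies $e^{\pi i\ell/n}$ with $e^{\pi i(2n-\ell)/n}$, and the two fixed points $y=\pm1$ (the cases $\ell=0$ and $\ell=n$) must be discarded, since by Equation \ref{eq:eigenpair} they give the identically zero vector $v_k=(\tau y)^k-(\tau/y)^k$ and, by Lemma \ref{lem:invariants} part i, correspond to the non-eigenvalues $\pm 2\sqrt{ac}$. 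Selecting one representative per remaining inverse pair leaves exactly $y_\ell=e^{\pi i\ell/n}$ for $\ell\in\{1,\ldots,n-1\}$, yielding $n-1$ eigenvalues. The quadratic factor $ay^2-d\tau y+a=0$ supplies the final pair $y_n=\frac{1}{2a}\left(d\tau\pm\sqrt{d^2\tau^2-4a^2}\right)$; by Vieta the product of its two roots is $a/a=1$, so they themselves form a single inverse pair, and together with the cyclotomic roots we recover all $n$ eigenvalues.

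Finally I would resolve the three cases from the discriminant $d^2\tau^2-4a^2$ of the quadratic, recalling that $a>0$ and $\tau=\sqrt{a/c}>0$ under the paper's standing assumptions. When $|d\tau|\le 2a$ the discriminant is nonpositive, so the two roots are complex conjugates with $|y_n|^2=y_n\overline{y_n}=1$, which is Case 2. When $|d\tau|>2a$ the roots are real with product $1$ and sum $d\tau/a$; if $d\tau>2a$ then $d\tau+\sqrt{d^2\tau^2-4a^2}>2a$ forces the $+$ root to exceed $1$ (Case 1), whereas if $d\tau<-2a$ the $-$ root falls below $-1$ (Case 3), the companion root in each instance being the reciprocal lying strictly between the fixed points. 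I do not anticipate a genuine obstacle beyond the initial factorization and the attendant bookkeeping that guarantees precisely $n$ eigenvalues once $\pm 1$ are removed.
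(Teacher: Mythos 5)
Your proof is correct and takes essentially the same route as the paper: the identical factorization $\left(ay^2-d\tau y+a\right)\left(y^{n-1}-y^{-(n+1)}\right)=0$ of Equation \ref{eq:polynomial} under $e=-a$, with the roots at $\pm 1$ discarded via Lemma \ref{lem:invariants}. You simply spell out the root-counting and the elementary discriminant case analysis that the paper's two-sentence proof leaves implicit.
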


\begin{proof} Equation \ref{eq:polynomial} factors to become:
\begin{equation*}
(y^2-\frac{d\tau}{a}y+1)(y^{n-1}-y^{-(n+1)})=0
\end{equation*}
The roots at $\pm1$ can be discarded because of Lemma \ref{lem:invariants}The remaining roots are as stated in the Proposition.
\end{proof}

\begin{definition}
\emph{The symbol $\phi_\ell$ means a solution of Equation
\ref{eq:cotangent} in the interval $\left(\frac{(\ell-1)\pi}{n}, \frac{\ell\pi}{n}\right)$.
The notation $y_\ell\approx L$ is reserved for:
there is $\kappa>1$ such that $y_\ell-L=O(\kappa^{-n})$ as $n$ tends to $\infty$.
We will furthermore denote the roots of $ay^2-d\tau y-e$ as follows:
\begin{equation*}
y_\pm \equiv \frac{1}{2a}\left(d\tau\pm\sqrt{d^2\tau^2+4ae}\right)
=\frac{1}{2\sqrt{ac}}\left(d\pm\sqrt{d^2+4ce}\right)
\end{equation*}}
(We will choose the branch-cut for the root as the positive imaginary axis. So $\sqrt{x}$
will always have a non-negative real part.)
\label{def:phi_ell-etc}
\end{definition}

\begin{proposition} Let $-a < e \leq a$ and $a$, $c$, $d$, $e$ fixed.
Then (not counting simple roots at $\pm 1$) for $n$ large enough we have the following
solutions of Equation \ref{eq:polynomial}:
\begin{equation*}
\begin{array}{cccl}
1.& a-e < d\tau & : & y_\ell=e^{i\phi_\ell}\; \mathrm{ for } \; \ell\in\{2,\cdots n\}
\;\mathrm{ and }\;y_1\approx y_+ > 1 \\[.15cm]
2.&-(a-e)\leq d\tau \leq a-e & : & y_\ell=e^{i\phi_\ell}\; \mathrm{ for }
\; \ell\in\{1,\cdots n\} \\[.15cm]
3. & d\tau <-(a-e)  & : & y_\ell=e^{i\phi_\ell}\; \mathrm{ for } \; \ell\in\{1,\cdots n-1\}
\;\mathrm{ and }\; y_n\approx y_- < -1
\end{array}
\end{equation*}
\label{prop:-a<e<a}
\end{proposition}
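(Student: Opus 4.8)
The plan is to account for the $2n$ nontrivial roots of Equation \ref{eq:polynomial} (all but the simple roots at $\pm1$), which by Equation \ref{eq:determinant} come in reciprocal pairs $\{y,y^{-1}\}$, so that there are exactly $n$ pairs to find. I would split them into \emph{unit-circle} roots $y=e^{i\phi}$, governed by Equation \ref{eq:cosine}, and \emph{off-circle} roots. Multiplying the first line of Equation \ref{eq:polynomial} by $y^{n+1}$ rewrites it as $y^{2n}P(y)=y^{2}P(1/y)$, where $P(y)\equiv ay^{2}-d\tau y-e$ is the quadratic whose roots are the $y_\pm$ of Definition \ref{def:phi_ell-etc}. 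Since $y^{2n}$ is exponentially large for $|y|>1$ and exponentially small for $|y|<1$, a Rouch\'e/perturbation argument will show that every off-circle root is exponentially close to a root of $P$ (if $|y|>1$) or to the reciprocal of such a root (if $|y|<1$); this is the mechanism underlying the notation $\approx$.

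For the circle roots I would work with $F(\phi)\equiv(e+a)\cos(n\phi)\sin\phi-\bigl(d\tau+(e-a)\cos\phi\bigr)\sin(n\phi)$, the difference of the two sides of Equation \ref{eq:cosine}, whose zeros in $(0,\pi)$ are exactly the admissible $\phi_\ell$. At the interior nodes $\phi=\ell\pi/n$ one has $F(\ell\pi/n)=(e+a)(-1)^{\ell}\sin(\ell\pi/n)$, which alternates sign because $e+a>0$, so the intermediate value theorem yields one zero in each $I_\ell=\bigl((\ell-1)\pi/n,\ell\pi/n\bigr)$ for $\ell=2,\dots,n-1$, while $F(0)=F(\pi)=0$ merely recover the discarded roots at $\pm1$. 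The whole case distinction then reduces to the two boundary intervals. Rescaling by $\theta=n\phi$ on $I_1$ gives the leading behavior $F\sim-(d\tau+e-a)\sin\theta$, whose sign on $(0,\pi)$ is controlled precisely by $d\tau-(a-e)$; likewise $\theta=n(\pi-\phi)$ on $I_n$ produces a leading term controlled by $d\tau+(a-e)$. Comparing these signs with the always-negative endpoint value $F(\pi/n)=-(e+a)\sin(\pi/n)$ (and its analogue at $(n-1)\pi/n$) shows that $I_1$ carries an extra zero iff $d\tau\le a-e$ and $I_n$ carries one iff $d\tau\ge-(a-e)$, which is exactly the trichotomy. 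I would also record the symmetry $F(\pi-\phi)=(-1)^{n}\widetilde F(\phi)$, where $\widetilde F$ is $F$ with $d\tau$ replaced by $-d\tau$; it interchanges $I_1\leftrightarrow I_n$ and cases $1\leftrightarrow 3$ (sending $y_+\mapsto-y_-$), so case 3 follows from case 1 without redoing the computation.

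It remains to identify the off-circle roots. A direct computation with $y_\pm=\tfrac{1}{2a}\bigl(d\tau\pm\sqrt{d^2\tau^2+4ae}\bigr)$ gives $y_+>1\iff d\tau>a-e$ and $y_-<-1\iff d\tau<-(a-e)$, and in these regimes the discriminant is automatically positive, so the relevant root is real. When $-(a-e)\le d\tau\le a-e$ (case 2) both roots of $P$ lie in the closed unit disk: if real then $-1\le y_-\le y_+\le1$ by the same inequalities, and if complex then $|y_\pm|^2=y_+y_-=-e/a<1$ since $e>-a$. Hence $P$ has exactly one root outside the unit disk in cases 1 and 3 and none in case 2, so the Rouch\'e estimate supplies exactly one off-circle pair with $y_1\approx y_+$ in case 1, one with $y_n\approx y_-$ in case 3, and none in case 2. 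Adding the $n-1$ (respectively $n$) circle roots already located exhausts all $n$ pairs, and this exact count forces a single zero of $F$ in each listed interval, completing the proof.

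The hard part will be the boundary intervals together with the $\approx$ estimate, where the limits are delicate: on $I_1$ and $I_n$ the interval shrinks like $1/n$ while $F$ is itself $O(1/n)$ there, so the sign determination requires uniform control of the rescaled expansion rather than a pointwise limit. In parallel, the exponential closeness $y_1-y_+=O(\kappa^{-n})$ needs a quantitative Rouch\'e bound with $\kappa=|y_+|>1$, extracted from $y^{2n}P(y)=y^{2}P(1/y)$ by isolating the simple zero of $P$ at $y_+$. Finally I would dispatch the degenerate case $d^2\tau^2+4ae=0$, where $y_+=y_-$, by the continuity-in-parameters argument already used in Proposition \ref{prop:eval=b}(iii).
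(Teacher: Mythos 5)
Your proposal is correct, but it takes a genuinely different route from the paper's on the unit-circle roots. The paper divides Equation \ref{eq:cosine} by $\sin n\phi$ and works with the cotangent form \ref{eq:cotangent}: by Corollary \ref{cor:App1} each branch of $\cot n\phi\,\sin\phi - \frac{e-a}{e+a}\cos\phi$ is strictly decreasing (here $\frac{e-a}{e+a}\leq 0 \leq 1$, so the corollary applies), while the constant $\frac{d\tau}{e+a}$ can then meet each branch at most once --- uniqueness per interval comes for free --- and the trichotomy at $I_1$ and $I_n$ is read off by comparing the branch ranges $(-\infty,\frac 1n]$ and $[-\frac 1n,\infty)$ with the value of the slowly varying side at the endpoints. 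You instead stay with the undivided form $F(\phi)$, get existence in the interior intervals from the sign alternation $F(\ell\pi/n)=(e+a)(-1)^\ell\sin(\ell\pi/n)$, resolve $I_1$ and $I_n$ by a rescaled boundary-layer expansion, and obtain uniqueness only a posteriori by exhausting the count of $n$ reciprocal pairs; for the off-circle roots the two arguments coincide in substance, since the paper's Lemma \ref{lem:App2} (an implicit-function-theorem homotopy rather than Rouch\'e) supplies exactly the quantitative exponential bound you call for. Note that your blanket claim that \emph{every} off-circle root is exponentially close to a root of $P$ is not actually needed (and would require extra care, since a priori roots could accumulate on the unit circle from outside); your counting argument sidesteps it, as only the existence direction plus the tally of circle roots is used. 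What each approach buys: the paper's monotonicity argument is shorter here and also covers $e>a$, but it genuinely fails for $e<-a$, where multiple intersections per branch occur for finite $n$ (see Proposition \ref{prop:e<-a} and Figure \ref{fig:e.less.-a}), and there the paper itself must fall back on precisely your counting-style argument --- so your route is the more robust template, at the cost of the delicate uniform sign control on the shrinking boundary intervals, which you correctly flag and which does go through. Two small remarks: your case-2 verification that both roots of $P$ lie in the closed unit disk (using $y_+y_-=-e/a<1$ when complex) is an extra check the paper's range argument renders unnecessary but that your counting requires, and it is correct; and your closing worry about the degenerate case $d^2\tau^2+4ae=0$ is vacuous, since in case 1 the hypothesis $d\tau>a-e$ gives $d^2\tau^2+4ae>(a+e)^2>0$ (and symmetrically in case 3), so the relevant root of $P$ is automatically real and simple.
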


\begin{proof} In this case $\sin n\phi=0$ and $\sin \phi\neq 0$
in Equation \ref{eq:cosine} does not give any solutions.
We are thus allowed to divide by $\sin n\phi=0$ to obtain Equation \ref{eq:cotangent}.
The right hand of that equation consists of $n$ smooth decreasing branches (see Appendix 1)
on
$$
\bigcup_{\ell=1}^n\,I_\ell \equiv\left[ 0,\frac \pi n \right)\cup\Big(\frac \pi n, \frac{2\pi}{n}\Big)\cdots
\cup \Big( \frac{(n-2)\pi}{n}, \frac{(n-1)\pi}{n} \Big) \cup \Big(\frac{(n-1)\pi}{n}, \pi\Big]
$$
whose ranges are $(-\infty,\frac 1n]$ on $I_1$, $[-\frac 1n,\infty)$ on $I_n$, and $(-\infty,\infty)$ in all other cases.
The left hand is non-decreasing on $[0,\pi]$. Thus every interval $I_\ell$ has a root, except
possibly the first and the last.

In each of the three cases we first solve Equation \ref{eq:cotangent} (see Figure
\ref{fig:a.between.e}). Then we find any remaining roots by other means.
If $d\tau >a-e$ (ie in case 1 of the Proposition) then the right hand at $\phi=0$
of that equation is positive. This means that for $n$ large enough there is no root in the
interval $I_1$ of Equation \ref{eq:cotangent}. All other intervals
$I_\ell$ (ie: $\ell\in \{2,\cdots n\}$ contain a root $y_\ell=e^{i\phi_\ell}$.
If $ay^2-d\tau y-e$ has a root has a root $y_+$ with absolute value greater than 1, then by
Lemma \ref{lem:App2} part ii we know that for large $n$ Equation \ref{eq:polynomial}
has a root exponentially close (as $n$ tends to infinity) to $y_+$ (of Definition
\ref{def:phi_ell-etc}). By substituting $d\tau>a-e$ in $y_+$ one sees that the root is real
and greater than 1. (Observe that by Lemma \ref{lem:invariants} this also yields a root
exponentially close to the reciprocal of $y_+$ and which is not equal to $y_-$.)
We thus obtain $n$ roots $y_\ell$ and $y_+$. The other $n$ roots are given by their
reciprocals. These combine (by Equation \ref{eq:eigenpair}) to give $n$ eigenvalues.
The (simple) roots at $\pm 1$ are discarded by Lemma \ref{lem:invariants}.

In case 2 we have that $d\tau \leq a-e$ and so $I_1$ contains a root
of Equation \ref{eq:cotangent}, and we have that $d\tau\geq -(a-e)$ and so the
interval $I_n$ also contains a root. Thus all non-trivial roots have the form
$e^{i\phi_\ell}$.

In case 3 we have roots in all $I_\ell$ except in $I_n$. An almost identical argument to
the one above now shows that there is a root exponentially close to $y_-$
which is real and smaller than -1.
\end{proof}

\begin{figure}[pbth]
\center
\includegraphics[width=5.0in]{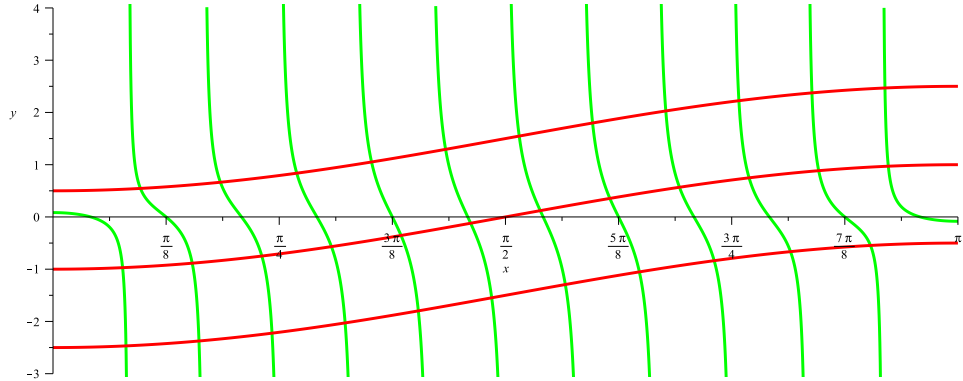}
\caption{ \emph{The three cases of Proposition \ref{prop:-a<e<a}: The fast varying (green)
plot is the right hand of Equation \ref{eq:cotangent}, and the slow varying (red)
curves represent its right hand side in case 1, 2 , and 3 (from top to bottom).}}
\label{fig:a.between.e}
\end{figure}

\begin{proposition} Let $e> a$ and $a$, $c$, $d$, $e$ fixed. Then (not counting simple roots
at $\pm 1$) for $n$ large we have the following solutions of Equation \ref{eq:polynomial}:
\begin{equation*}
\begin{array}{cccl}
1.&  -(a-e)\leq d \tau & : & y_\ell=e^{i\phi_\ell}\; \mathrm{ for } \; \ell\in\{2,\cdots n\}
\;\mathrm{ and }\; y_1\approx y_+ \geq \frac ea\\[.15cm]
2.&(a-e)< d\tau < -(a-e) & : & y_\ell=e^{i\phi_\ell}\; \mathrm{ for } \; \ell\in\{2,\cdots n-1\}
 \;\\[.1cm]
 &&&   \mathrm{ and }\;y_1\approx y_+\in[1,\frac ea)  \;, \; y_n \approx y_-\in (-\frac ea, -1]\\[.15cm]
3. & d\tau \leq(a-e)  & : & y_\ell=e^{i\phi_\ell}\; \mathrm{ for } \; \ell\in\{1,\cdots n-1\} \;\mathrm{ and }\; y_n\approx y_- \leq\frac{-e}{a}
\end{array}
\end{equation*}
\label{prop:e>a}
\end{proposition}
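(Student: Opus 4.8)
The plan is to follow the template of Proposition \ref{prop:-a<e<a}, locating the roots on the unit circle through Equation \ref{eq:cotangent} and the off-circle roots through the quadratic $ay^2-d\tau y-e$, but being careful about the one structural change forced by $e>a$. Write the right-hand side of Equation \ref{eq:cotangent} as $g(\phi)=\frac{d\tau}{e+a}+\frac{e-a}{e+a}\cos\phi$. Since $e+a>0$ and now $e-a>0$, the coefficient of $\cos\phi$ is \emph{positive}, so $g$ is strictly decreasing on $[0,\pi]$ --- the opposite of the non-decreasing behavior exploited in Proposition \ref{prop:-a<e<a}. Its endpoints are $g(0)=\frac{d\tau+(e-a)}{e+a}$ and $g(\pi)=\frac{d\tau-(e-a)}{e+a}$. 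As before, the left side $\cot n\phi\,\sin\phi$ consists of $n$ strictly decreasing branches (Appendix 1), with range $(-\infty,\tfrac1n]$ on $I_1$, $[-\tfrac1n,\infty)$ on $I_n$, and $(-\infty,\infty)$ on each interior $I_\ell$.

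First I would count the circle roots. On each interior interval $I_\ell$ ($2\le \ell\le n-1$) \emph{both} sides now decrease, so the one-crossing argument of the previous proof no longer applies verbatim; this is the step needing the most care, and I expect it to be the main obstacle. I would resolve it quantitatively for large $n$: on an interior branch one has $g'=-\frac{e-a}{e+a}\sin\phi$, bounded in magnitude by $\frac{e-a}{e+a}<1$, while a short estimate of $\frac{d}{d\phi}[\cot n\phi\,\sin\phi]=-n\csc^2(n\phi)\sin\phi+\cot n\phi\,\cos\phi$ shows its value is $\le -ns+\frac{1}{4ns}$ with $s=\sin\phi\ge\sin(\pi/n)$, hence below $-1$ for $n$ large. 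Thus the difference (left side) $-g$ is strictly decreasing from $+\infty$ to $-\infty$ on each interior interval, giving exactly one root $y_\ell=e^{i\phi_\ell}$ there.

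Next I would decide the fate of the boundary intervals $I_1$ and $I_n$ by comparing $g$ at the endpoints against the limiting values $\pm\tfrac1n$ of the left side. Near $\phi=0$ the left side equals $\tfrac1n$ and decreases to $-\infty$, so $I_1$ carries a root iff $g(0)<\tfrac1n$, which for large $n$ is the condition $d\tau\le a-e$. Symmetrically $I_n$ carries a root iff $g(\pi)>-\tfrac1n$, i.e. $d\tau\ge e-a$ for large $n$. This already fixes the circle-root index sets: in case 1 ($d\tau\ge e-a$) only $I_1$ is empty, in case 3 ($d\tau\le a-e$) only $I_n$ is empty, and in case 2 ($a-e<d\tau<e-a$) both are empty.

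Finally I would recover the missing roots from $ay^2-d\tau y-e$. Because $e>a>0$, its discriminant $d^2\tau^2+4ae$ is positive, so $y_\pm$ are real with product $y_+y_-=-e/a<-1$; hence $y_+>0>y_-$ and $|y_+||y_-|=e/a$. A short computation gives the thresholds $y_+\ge e/a\Leftrightarrow d\tau\ge e-a$, $y_+\ge 1\Leftrightarrow d\tau\ge a-e$, $y_-\le -e/a\Leftrightarrow d\tau\le a-e$, and $y_-\le -1\Leftrightarrow d\tau\le e-a$. Whenever one of these roots has modulus exceeding $1$, Lemma \ref{lem:App2} part ii furnishes a genuine root of Equation \ref{eq:polynomial} exponentially close to it (together with its reciprocal, distinct from the other root since $y_+y_-\ne 1$). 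Matching the three cases: case 1 supplies $y_1\approx y_+\ge e/a$; case 3 supplies $y_n\approx y_-\le -e/a$; case 2 supplies both $y_1\approx y_+$ and $y_n\approx y_-$, and the thresholds place them in $[1,e/a)$ and $(-e/a,-1]$ respectively. In each case the circle roots together with the off-circle roots number exactly $n$, which equals the dimension of $Q$ and therefore accounts for all eigenpairs through Equation \ref{eq:eigenpair}; the simple roots at $\pm1$ are discarded by Lemma \ref{lem:invariants} as usual.
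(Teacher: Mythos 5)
Your proposal is correct and follows essentially the same route as the paper's proof: one root of Equation \ref{eq:cotangent} per interior branch, endpoint comparisons on $I_1$ and $I_n$ to sort the three cases, and the sign analysis of $f(y)=ay^2-d\tau y-e$ at $\pm 1$ and $\pm\frac ea$ combined with Lemma \ref{lem:App2} to place the off-circle roots in $[\frac ea,\infty)$, $[1,\frac ea)$, $(-\frac ea,-1]$, or $(-\infty,-\frac ea]$. The only difference is local and cosmetic: where you establish interior-branch uniqueness via the quantitative bound $\frac{\partial}{\partial\phi}\left[\cot n\phi\,\sin\phi\right]\leq -n\sin\phi+\frac{1}{4n\sin\phi}<-1$ against $|g'|\leq\frac{e-a}{e+a}<1$, the paper absorbs the cosine term into the left-hand side and invokes Corollary \ref{cor:App1} (monotonicity of $\cot n\phi\,\sin\phi-B\cos\phi$ for $B\leq 1$), which settles that step for all $n$ rather than only large $n$.
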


\begin{proof} The proof is similar to that of the previous Proposition. There
are again three cases (see Figure \ref{fig:e.greater.a}). Note that
$0\leq\frac{e-a}{e+a}\leq 1$, so that the slopes of both right hand and left hand
of Equation \ref{eq:cotangent} are negative. Corollary \ref{cor:App1} insures that
on each branch $I_\ell\in [0,\pi]$
\begin{equation*}
\cot(n\phi)\sin(\phi)-\frac{e-a}{e+a}\cos(\phi) \mathrm{\;is\;decreasing\;and\;}
\dfrac{\partial}{\partial \phi}
\left[ \cot(n\phi)\sin(\phi)-\frac{e-a}{e+a}\cos(\phi)\right]\leq 0
\end{equation*}
So it can only have one zero in each branch.

With this proviso, cases 1 and 3 can be resolved as in Proposition \ref{prop:-a<e<a}.
For example, if the left hand of Equation \ref{eq:cotangent} at $\phi =\pi$ is greater than
or equal to zero then there for all $n$ is a root in the interval $I_n$ but
(for $n$ large enough) none in $I_1$. This happens if $-(a-e)\leq d \tau$, which gives case 1.
Case 3 is similar.
It remains to analyze case 2. Denote $f(y)\equiv ay^2-d\tau y-e$. Since
$f(-1)$, $f(0)$, and $f(1)$ are negative $f$ must have one root greater than 1,
and one less than -1. By Lemma \ref{lem:App2} each of these roots is approximated
exponentially well by a root of Equation \ref{eq:polynomial}. Since furthermore
$f(-\frac ea)$ and $f(\frac ea)$ are greater than 0, the loci of these roots are
(for large enough $n$), one in $(1,\frac ea)$ and one in $(-\frac ea, -1)$.
\end{proof}

\begin{figure}[pbth]
\center
\includegraphics[width=5.0in]{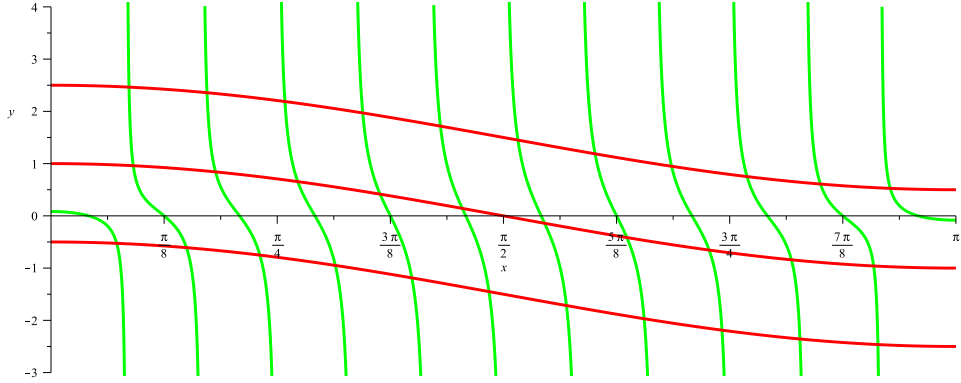}
\caption{ \emph{The three cases of Proposition \ref{prop:e>a} (1, 2, 3 from top to bottom).
See caption of Figure \ref{fig:a.between.e}.}}
\label{fig:e.greater.a}
\end{figure}

\begin{proposition} Let $e<-a$ and $a$, $c$, $d$, $e$ fixed. Then (not counting simple roots
at $\pm 1$) for $n$ large we have the following solutions of Equation \ref{eq:polynomial}:
\begin{equation*}
\begin{array}{cccl}
1.& d\tau \leq -(a-e) &\mathrm{ then }& y_\ell=e^{i\phi_\ell}\; \mathrm{ for } \; \ell\in\{2,\cdots n\} \;\mathrm{ and }\;
y_1\approx y_-\leq \frac ea\\[.15cm]
2.&-(a-e)< d\tau < a-e &\mathrm{then}&
y_\ell=e^{i\phi_\ell}\; \mathrm{ for } \; \ell\in\{2,\cdots n-1\}
\;\mathrm{ and }\; |y_1|,|y_n|>1\\[.15cm]
3. &  (a-e) \leq d\tau  &\mathrm{then}& y_\ell=e^{i\phi_\ell}\; \mathrm{ for } \; \ell\in\{1,\cdots n-1\} \;\mathrm{ and }\;
y_n\approx y_+ \geq -\frac ea
\end{array}
\end{equation*}
Case 2 can be further subdivided as follows:
\begin{equation*}
\begin{array}{cccl}
2a. & -(a-e) < d\tau\leq -2\sqrt{|ae|} & : & y_1\approx \tilde y_1, y_n\approx \tilde y_n,
\;\tilde y_1, \tilde y_n \in \left[\frac ea,-1\right]\\
2b. & -2\sqrt{|ae|} < d\tau < 2\sqrt{|ae|} & : & y_1, y_n
\mathrm{\;not \;real, \;} |y_1|\approx -\frac ea\approx |y_n|\\
2c. & 2\sqrt{|ae|} \leq d\tau < (a-e) & : & y_1\approx \tilde y_1, y_n\approx \tilde y_n,
\;\tilde y_1, \tilde y_n \in \left[1,-\frac ea\right]
\end{array}
\end{equation*}
\label{prop:e<-a}
\end{proposition}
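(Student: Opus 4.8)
The plan is to follow the template of Propositions \ref{prop:-a<e<a} and \ref{prop:e>a}, splitting the roots of Equation \ref{eq:polynomial} into those on the unit circle (found from the cotangent equation \ref{eq:cotangent}) and those off it (found from the quadratic $f(y)=ay^2-d\tau y-e$ of Definition \ref{def:phi_ell-etc} together with Lemma \ref{lem:App2}). Throughout I would use that $e<-a$ forces $e+a<0$ and $\frac{e-a}{e+a}=\frac{|e|+a}{|e|-a}>1$, so the slow curve $m(\phi):=\frac{d\tau}{e+a}+\frac{e-a}{e+a}\cos\phi$ on the right of \ref{eq:cotangent} is strictly decreasing on $(0,\pi)$. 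I would first note, as before, that $\sin n\phi=0$ gives no solution of \ref{eq:cosine}, so the passage to \ref{eq:cotangent} is legitimate, and that on each interior branch $I_\ell$ with $2\le\ell\le n-1$ the left side $\cot(n\phi)\sin\phi$ decreases continuously from $+\infty$ to $-\infty$ while $m$ stays bounded; by the intermediate value theorem each such branch carries at least one solution $\phi_\ell$, hence a conjugate pair $e^{\pm i\phi_\ell}$ by Lemma \ref{lem:invariants}.

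The three main cases are separated by the two endpoint branches. Using $\cot(n\phi)\sin\phi\to \frac1n$ as $\phi\to0^+$ and $\to-\frac1n$ as $\phi\to\pi^-$, together with $m(0)=\frac{d\tau-(a-e)}{e+a}$ and $m(\pi)=\frac{d\tau+(a-e)}{e+a}$, I would show (for $n$ large) that $I_1$ carries a root exactly when $m(0)\le 0$, i.e.\ $d\tau\ge a-e$, and that $I_n$ carries a root exactly when $m(\pi)\ge0$, i.e.\ $d\tau\le-(a-e)$. This produces precisely the index sets $\{2,\dots,n\}$, $\{2,\dots,n-1\}$, $\{1,\dots,n-1\}$ of cases 1, 2, 3. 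For the off-circle roots I turn to $f$, whose roots $y_\pm$ have product $-e/a=|e|/a>1$. Evaluating $f(-1)=(a-e)+d\tau$, $f(0)=-e>0$, $f(1)=(a-e)-d\tau$ sorts the cases by the signs of $f(\pm1)$: in case 1 ($f(-1)\le0$) a single root $y_-\le e/a<-1$, in case 3 ($f(1)\le0$) a single root $y_+\ge -e/a>1$, and in case 2 ($f(\pm1)>0$) both roots outside $[-1,1]$. For each root of $f$ of modulus $>1$, Lemma \ref{lem:App2} yields a root of \ref{eq:polynomial} exponentially close to it, and Lemma \ref{lem:invariants} supplies its reciprocal; the bounds $y_-\le e/a$ and $y_+\ge -e/a$ follow by checking the sign of $f$ at $e/a$ and at $-e/a$.

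To subdivide case 2 I would read off the discriminant $d^2\tau^2+4ae=d^2\tau^2-4a|e|$. When $d\tau\le-2\sqrt{|ae|}$ the vertex $d\tau/(2a)$ lies in $(e/a,-1)$ and $f$ has two real roots in $[e/a,-1]$ (case 2a); when $d\tau\ge 2\sqrt{|ae|}$ the symmetric computation gives two real roots in $[1,-e/a]$ (case 2c); and when $|d\tau|<2\sqrt{|ae|}$ the roots form a complex-conjugate pair, necessarily of modulus $\sqrt{-e/a}>1$ since their product is $-e/a$ (case 2b). The strict inequality $2\sqrt{|ae|}<a-e$ (by AM--GM, strict because $|e|>a$) guarantees the three sub-intervals are genuine. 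Finally I would close by counting: the cleared form of \ref{eq:polynomial} has degree $2n+2$ with nonzero constant term, and $\pm1$ are (simple) roots, so there are exactly $2n+2$ roots. The interior branches supply $2(n-2)$ (or $2(n-1)$) distinct conjugate roots, Lemma \ref{lem:App2} the remaining off-circle roots with their reciprocals, and $\pm1$ the last two; these distinct points already total $2n+2$, which forces exactly one solution per interior branch and forbids any stray solution in $I_1$ or $I_n$.

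I expect the main obstacle to be precisely the failure of per-branch monotonicity. Since Corollary \ref{cor:App1} only applies when the coefficient of $\cos\phi$ lies in $[0,1]$, and here $\frac{e-a}{e+a}>1$, the uniqueness-in-each-branch step of Proposition \ref{prop:e>a} is unavailable; the global root count above is the device I would use to recover it. The second new difficulty is case 2b, which has no analogue in the earlier propositions: $f$ has no real root, so Lemma \ref{lem:App2} must be applied to a complex-conjugate pair of target points at modulus $\sqrt{-e/a}$ rather than to isolated real roots, and one must confirm that the approximation and the inversion/conjugation symmetry of Lemma \ref{lem:invariants} together still deliver exactly the four expected off-circle roots.
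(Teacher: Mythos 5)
Your proposal is correct and follows essentially the same route as the paper's proof: the same split into unit-circle roots located branch-by-branch via Equation \ref{eq:cotangent} (with the endpoint tests at $\phi=0,\pi$ deciding which of $I_1$, $I_n$ carries a root) and off-circle roots obtained from the sign pattern of $f(y)=ay^2-d\tau y-e$ at $y\in\{e/a,-1,0,1,-e/a\}$ together with the discriminant and Lemma \ref{lem:App2}. The two places where you go beyond the paper are in fact refinements: your explicit degree-$(2n+2)$ root count is precisely the device the paper only gestures at (in the remark following the proposition, where it concedes that per-branch uniqueness fails for small $n$ since Corollary \ref{cor:App1} does not apply when $\frac{e-a}{e+a}>1$) to rule out stray or multiple branch solutions for large $n$, and your modulus $\sqrt{-e/a}$ in case 2b is the value the argument actually yields, since the conjugate roots of $f$ have product $-e/a$ (the proposition's displayed $|y_1|\approx -\frac ea$ appears to be a typo, and the paper's own proof likewise only establishes modulus $\sqrt{-e/a}>1$).
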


\begin{proof} Dividing by $e+a$ (which is negative) we see that the three cases in the statement of the Proposition are equivalent to:
\begin{equation*}
\begin{array}{cl}
1. & \dfrac{e-a}{e+a}\leq \dfrac{d\tau}{e+a}\\[0.25cm]
2. & -\dfrac{e-a}{e+a}<\dfrac{d\tau}{e+a}<\dfrac{e-a}{e+a}\\[0.25cm]
3. & \dfrac{d\tau}{e+a}\leq \dfrac{e-a}{e+a}\\
\end{array}
\end{equation*}
As in the proof of Proposition \ref{prop:e>a}, these three cases correspond to Equation
\ref{eq:cotangent} not having a solution in $I_1$ (case 1), not having having solutions in
both $I_1$ and $I_n$ (case 2), and not having a solution in $I_n$ (in case 3). Thus in
case 1 we need to determine a solution $y_1$ of Equation \ref{eq:polynomial} that is
not on the unit circle. We need a similar solution $y_n$
in case 3. In case 2, we need to determine two extra solutions $y_1$, $y_n$.

Setting $f(y)\equiv ay^2-d\tau y - e$ it is straightforward to verify the following table
of values of $f$ for the three cases mentioned above.
\begin{equation*}
\begin{array}{cc|ccccccc}
 & & y=\frac{e}{a} & & y=-1 & y=0 & y=1 & & y=\frac{-e}{a}\\
\hline \hline
1. & d\tau\leq -(a-e)  & \leq 0 & & \leq 0 & >0 & >0 & & >0 \\
\hline
2a. & -(a-e)<d\tau < -2\sqrt{|ae|} & >0 & f\left(\frac{d\tau}{2a}\right)\leq 0 & >0 & >0 & >0 & & >0\\
\hline
2b. & -2\sqrt{|ae|}<d\tau\leq 2\sqrt{|ae|} & >0 & & >0 & >0 & >0 & & >0\\
\hline
2c. & 2\sqrt{|ae|}\leq d\tau<(a-e) & >0 & & >0 & >0 & >0 & f\left(\frac{d\tau}{2a}\right)\leq 0 & >0\\
\hline
 3. & d\tau \geq (a-e) & >0 & & >0 & >0 & \leq 0 & & \leq 0 \\
\hline
\end{array}
\end{equation*}
From this table it is clear that in case 1 $f$ has one real root less than or equal to
$\frac ea$. Similar in case 3 where there is root greater than or equal to $\frac{-e}{a}$.
In cases 2a and 2c there are two real valued solutions with absolute
value greater than 1. Finally in case 2b the roots of $f$ are complex conjugates with product
$-e/a$. By hypothesis this is greater than 1. So also here $f$ has two roots with absolute
value greater than 1.

By Lemma \ref{lem:App2} each of the larger than unity roots of $f$ is approximated
exponentially
well (in $n$) by a root of Equation \ref{eq:polynomial}. This gives the final result.
\end{proof}

It perhaps worth pointing out that in the last case it is now \emph{not true} that
Equation \ref{eq:cotangent} has at most one solution in
each interval $I_\ell$. See for instance Figure \ref{fig:e.less.-a} where one can see
3 solutions in $I_2$. However, for large $n$, as the above argument shows, it \emph{is true}
that these solutions are unique. A more direct proof of this fact appears complicated.
\begin{figure}[pbth]
\center
\includegraphics[width=5.0in]{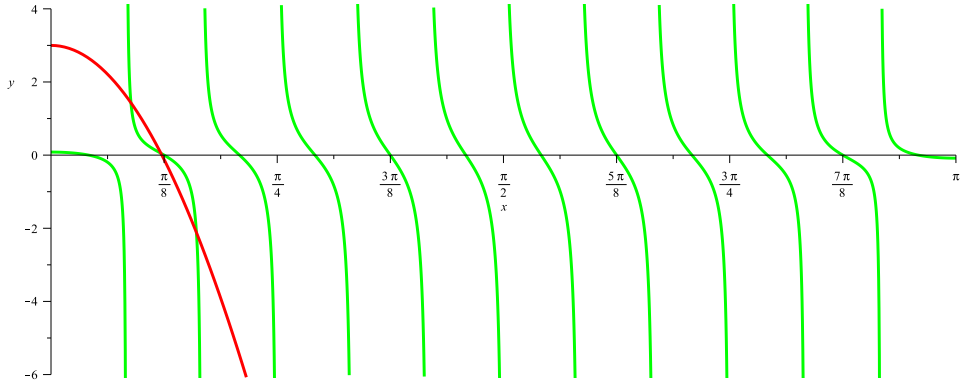}
\caption{ \emph{Detail of case 2b of Proposition \ref{prop:e<-a}: $a=1$, $e=-1.05$, and
$d\tau=-1.9$. Here $n=12$. The multiple roots in the second branch disappear for large $n$
(while holding other parameters fixed).}}
\label{fig:e.less.-a}
\end{figure}

\section{The Spectra}
\label{chap:spectrum}

In this section we apply Equation \ref{eq:eigenpair} of Lemma \ref{lem:polynomial} to
the propositions of the previous section
to obtain the spectrum of the $n+1$ by $n+1$ matrix $A$ of Section \ref{chap:intro}. This
gives us our main results. About the associated eigenvectors we remark here that those
can be obtained using the same lemma. Note that wherever there is a double root in the
polynomial equation equation \ref{eq:polynomial} we obtain only one eigenvector.
A generalized eigenvector (associated with a Jordan normal block of dimension 2 or higher)
can be derived (see Yueh \cite{Yueh} for some examples). Since we are mainly interested in
the spectrum we will not pursue this here.

\begin{definition} \emph{In this section we will denote, for $e\neq 0$,
\begin{equation} \label{eq:rpm_def}
r_\pm \equiv \dfrac{1}{2}\left[\left(1-\dfrac ae\right)d \pm \left(1+\dfrac ae\right)\sqrt{d^2+4ce} \right].
\end{equation}
When $e=0$, by taking limits as $e\to 0$ we define\footnote{Note that
  $r_{+}$ (respectively $r_{-}$) is not defined if $e=0$ and $d<0$
  (respectively $d>0$); careful examination of the cases corresponding
  to these parameter values shows that the undefined symbols are not
  used.}
\begin{align*}
r_{-} &= d+\frac{ac}{d} \mbox {\quad if $d<0$ } \\
r_{+} &= d+\frac{ac}{d} \mbox {\quad if $d>0$} .
\end{align*}
In this section the symbol $\psi_\ell$ means a solution of Equation
\ref{eq:cotangent} in the interval $\left[\frac{(\ell-1)\pi}{n}, \frac{\ell\pi}{n}\right)$
(cf Definition \ref{def:phi_ell-etc}).}
\label{def:psi_ell}
\end{definition}

\begin{theorem} Let $-a \leq e \leq a$ and $a$, $c$, $d$, $e$ fixed. Then for $n$ large enough the $n+1$ eigenvalues
$\{r_i\}_{i=0}^n$ of the matrix $A$ are the following.
First, $r_0=b$. The other $n$ eigenvalues are:
\begin{equation*}
\begin{array}{cccl}
1.& (a-e)\sqrt{\frac{c}{a}} < d &:& r_\ell=2\sqrt{ac}\cos \psi_\ell\; ,\; \ell\in\{2,\cdots n\} ,\; r_1\approx r_+>2\sqrt{ac}\\[.15cm]
2.&-(a-e)\sqrt{\frac{c}{a}}\leq d \leq (a-e)\sqrt{\frac{c}{a}} &:& r_\ell=2\sqrt{ac}\cos \psi_\ell\; , \; \ell\in\{1,\cdots n\} \\[.15cm]
3. & d <-(a-e)\sqrt{\frac{c}{a}}  &:& r_\ell=2\sqrt{ac}\cos \psi_\ell\; ,\; \ell\in\{1,\cdots n-1\}
,\; r_n\approx r_-<-2\sqrt{ac}
\end{array}
\end{equation*}
When $a=-e$ then $\psi_\ell=(\ell-1)\pi/n$, otherwise $\psi_\ell \in ((\ell-1)\frac{\pi}{n}, \ell\frac{\pi}{n})$ (except possibly for $\ell=1$ and $\ell=n$).
\label{thm:-a.e.a}
\end{theorem}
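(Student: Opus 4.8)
The plan is to treat the theorem as a translation, under the correspondence $r=\sqrt{ac}\,(y+y^{-1})$ of Equation~\ref{eq:eigenpair}, of the root classifications already proved in Propositions~\ref{prop:-a<e<a} and~\ref{prop:a+e=0}. By the block-triangular structure used in Proposition~\ref{prop:eval=b}, the $1\times1$ block contributes the eigenvalue $r_0=b$, and the remaining $n$ eigenvalues are exactly the eigenvalues of the reduced matrix $Q$. Lemma~\ref{lem:polynomial} puts these in bijection (after the pairing $y\leftrightarrow y^{-1}$) with the non-trivial roots of Equation~\ref{eq:polynomial}. So once the roots are known, the eigenvalues are obtained simply by applying $y\mapsto\sqrt{ac}\,(y+y^{-1})$, and the three cases of the theorem are nothing but the images of the three cases of Proposition~\ref{prop:-a<e<a}.

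First I would rewrite the hypotheses. Since $\tau=\sqrt{a/c}$, the condition $a-e<d\tau$ that delimits case~1 of Proposition~\ref{prop:-a<e<a} is equivalent to $(a-e)\sqrt{c/a}<d$, and the other two thresholds transform the same way, reproducing the case split in the statement. For a root on the unit circle, $y_\ell=e^{i\phi_\ell}$, the image is $r_\ell=\sqrt{ac}\,(e^{i\phi_\ell}+e^{-i\phi_\ell})=2\sqrt{ac}\cos\phi_\ell$; because $\phi\mapsto2\sqrt{ac}\cos\phi$ is strictly decreasing on $[0,\pi]$ and the $\phi_\ell$ increase with $\ell$, the resulting $r_\ell$ are distinct and inherit the ordering, and for $-a<e\le a$ the interior solutions give $\psi_\ell=\phi_\ell$. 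For the off-circle roots I must check the algebraic identity $\sqrt{ac}\,(y_\pm+y_\pm^{-1})=r_\pm$: the $y_\pm$ are the roots of $ay^2-d\tau y-e$, so $y_+y_-=-e/a$ and hence $y_\pm^{-1}=-(a/e)\,y_\mp$, and substituting this together with the explicit $y_\pm$ of Definition~\ref{def:phi_ell-etc} gives $\sqrt{ac}\,(y_\pm+y_\pm^{-1})=\tfrac12[(1-a/e)d\pm(1+a/e)\sqrt{d^2+4ce}]=r_\pm$. The strict inequalities $r_1>2\sqrt{ac}$ (case~1) and $r_n<-2\sqrt{ac}$ (case~3) then follow from $t+t^{-1}>2$ for $t>1$ and $t+t^{-1}<-2$ for $t<-1$ applied to $y_+>1$ and $y_-<-1$, and the exponential closeness $y_1\approx y_+$, $y_n\approx y_-$ transfers to $r_1\approx r_+$, $r_n\approx r_-$ because $y\mapsto\sqrt{ac}\,(y+y^{-1})$ is locally Lipschitz away from $0$.

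The boundary value $e=-a$ lies outside Proposition~\ref{prop:-a<e<a} and must be taken from Proposition~\ref{prop:a+e=0}, where Equation~\ref{eq:polynomial} factors explicitly and the unit-circle roots sit exactly at $y=e^{i\pi\ell/n}$, i.e.\ on the endpoints of the intervals $I_\ell$. This is precisely why the statement is phrased with $\psi_\ell$: its defining interval $[(\ell-1)\pi/n,\ell\pi/n)$ is half-open and so assigns each endpoint root a unique index, whereas for $-a<e\le a$ the root is strictly interior and the open-interval description applies (the boundary intervals $I_1$ and $I_n$ being the source of the stated exceptions at $\ell=1$ and $\ell=n$, since these are exactly the intervals that may fail to contain a root in cases~1 and~3). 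A useful consistency check here is that setting $e=-a$ in Definition~\ref{def:psi_ell} makes $1+a/e=0$, so $r_+=r_-=d$, which matches the eigenvalue produced by the remaining quadratic factor of Proposition~\ref{prop:a+e=0}.

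I expect the principal difficulty to be bookkeeping rather than analysis. The three pieces that need care are: confirming that the $2n$ roots of Equation~\ref{eq:polynomial} collapse under $y\leftrightarrow y^{-1}$ to exactly $n$ eigenvalues with no double counting; reconciling the half-open $\psi_\ell$ convention with the endpoint roots that appear precisely at $e=-a$; and tracking the exceptional indices $\ell=1,n$ consistently across the three cases so that the claimed index ranges $\{2,\dots,n\}$, $\{1,\dots,n\}$, and $\{1,\dots,n-1\}$ come out correctly. None of these requires a new estimate; each is a matter of aligning the conclusions of the cited propositions with the $\psi_\ell$ indexing of the statement.
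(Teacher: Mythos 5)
Your proposal is correct and follows essentially the same route as the paper's own (very brief) proof: apply the eigenpair correspondence $r=\sqrt{ac}\,(y+y^{-1})$ of Lemma~\ref{lem:polynomial} to Propositions~\ref{prop:a+e=0} and~\ref{prop:-a<e<a}, substitute $\tau^2=a/c$ in the case thresholds, and simplify $\sqrt{ac}\,(y_\pm+y_\pm^{-1})$ to $r_\pm$. Your additional bookkeeping (the explicit $r_\pm$ algebra via $y_+y_-=-e/a$, the strict bounds from $t+t^{-1}$, and the half-open $\psi_\ell$ convention absorbing the endpoint roots at $e=-a$) just fills in details the paper leaves implicit.
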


\begin{proof} This follows from applying Proposition \ref{lem:polynomial} to
Propositions \ref{prop:a+e=0} and \ref{prop:-a<e<a}. We have substituted $a/c$ for $\tau^2$.
$r_+$ and $r_-$ are obtained by simplifying the expressions for $\sqrt{ac}(y_++y_+^{-1})$ and
$\sqrt{ac}(y_-+y_-^{-1})$, respectively.
\end{proof}

The next two results follow in the same manner from Propositions \ref{prop:e>a} and \ref{prop:e<-a}.
We omit the proofs since they are easy.

\begin{theorem} Let $e>a$ and $a$, $c$, $d$, $e$ fixed. Then for $n$ large enough the $n+1$ eigenvalues $\{r_i\}_{i=0}^n$ of the matrix $A$ are the following. First, $r_0=b$. The other $n$ eigenvalues are:
\begin{equation*}
\begin{array}{cccl}
1.& -(a-e)\sqrt{\frac{c}{a}} \leq d &:& r_\ell=2\sqrt{ac}\cos \psi_\ell\; ,\; \ell\in\{2,\cdots n\}
,\; r_1\approx r_+\\[.15cm]
2.&(a-e)\sqrt{\frac{c}{a}}<d < -(a-e)\sqrt{\frac{c}{a}} &:& r_\ell=2\sqrt{ac}\cos \psi_\ell\; , \; \ell\in\{2,\cdots n-1\},\; r_1\approx r_+,  r_n\approx r_-\\[.15cm]
3. & d \leq (a-e)\sqrt{\frac{c}{a}}  &:& r_\ell=2\sqrt{ac}\cos \psi_\ell\; ,\; \ell\in\{1,\cdots n-1\}
,\; r_n\approx r_-
\end{array}
\end{equation*}
Furthermore we also have that in these cases:
\begin{equation*}
\begin{array}{cl}
1.& r_+ \geq \sqrt{ac}\left(\dfrac ea + \dfrac ae\right)\\[.15cm]
2.&r_- \in \left(-\sqrt{ac}\left(\dfrac ea + \dfrac ae\right), -2\sqrt{ac}\right]\;  \mathrm{ and } \;
r_+\in \left[2\sqrt{ac}, \sqrt{ac}\left(\dfrac ea + \dfrac ae\right)\right)\\[.15cm]
3. & r_- \leq -\sqrt{ac}\left(\dfrac ea + \dfrac ae\right)
\end{array}
\end{equation*}
\label{thm:e>a}
\end{theorem}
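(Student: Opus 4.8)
The plan is to read off the spectrum of $A$ from the root locations already established in Proposition \ref{prop:e>a}, exactly as was done for Theorem \ref{thm:-a.e.a}. Since $A_{n+1}$ is block lower triangular with a $1\times1$ block giving $r_0=b$ and an $n\times n$ block equal to $Q_n$ (because $a/\tau^2=c$), the remaining $n$ eigenvalues of $A$ are the eigenvalues of $Q$. By Equation \ref{eq:eigenpair} every such eigenvalue is $g(y):=\sqrt{ac}\,(y+y^{-1})$ evaluated at a root $y$ of Equation \ref{eq:polynomial}. Because $g(y)=g(y^{-1})$, the reciprocal pairs guaranteed by Lemma \ref{lem:invariants} collapse, so the $n$ roots listed in Proposition \ref{prop:e>a} (with the simple roots at $\pm1$ discarded) yield exactly the $n$ non-trivial eigenvalues. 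Thus the whole argument reduces to pushing the three cases of Proposition \ref{prop:e>a} through $g$, and all ``for $n$ large'' qualifiers are inherited from there.

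First I would handle the unit-circle roots. For $y_\ell=e^{i\phi_\ell}$ one gets $g(y_\ell)=2\sqrt{ac}\cos\phi_\ell$, and since $\phi_\ell$ solves Equation \ref{eq:cotangent} on $((\ell-1)\pi/n,\ell\pi/n)$ it is the $\psi_\ell$ of Definition \ref{def:psi_ell}; this produces the entries $r_\ell=2\sqrt{ac}\cos\psi_\ell$ with index ranges copied verbatim from Proposition \ref{prop:e>a}. The case thresholds translate directly: dividing the conditions relating $d\tau$ to $\pm(a-e)$ by $\tau=\sqrt{a/c}>0$ and using $1/\tau=\sqrt{c/a}$ converts them into the stated conditions relating $d$ to $\pm(a-e)\sqrt{c/a}$.

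Next I would identify the off-circle roots. The outliers $y_\pm$ are the roots of $f(y)=ay^2-d\tau y-e$, so Vieta gives $y_+y_-=-e/a$ and hence $y_\pm^{-1}=-\tfrac ae\,y_\mp$; therefore $g(y_\pm)=\sqrt{ac}\bigl(y_\pm-\tfrac ae\,y_\mp\bigr)$. Substituting $y_\pm=\tfrac{1}{2\sqrt{ac}}\bigl(d\pm\sqrt{d^2+4ce}\bigr)$ from Definition \ref{def:phi_ell-etc} and collecting the $d$ and $\sqrt{d^2+4ce}$ terms reproduces $r_\pm=\tfrac12\bigl[(1-\tfrac ae)d\pm(1+\tfrac ae)\sqrt{d^2+4ce}\bigr]$ of Equation \ref{eq:rpm_def}. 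Since Proposition \ref{prop:e>a} approaches these roots exponentially fast, we obtain $r_1\approx r_+$ and $r_n\approx r_-$ in the cases indicated.

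Finally, the stated bounds follow from monotonicity of $g$. On each of $(1,\infty)$ and $(-\infty,-1)$ the function $g$ is strictly increasing, with $g(\pm1)=\pm2\sqrt{ac}$ and $g(\pm e/a)=\pm\sqrt{ac}(e/a+a/e)$. Feeding the $y$-ranges of Proposition \ref{prop:e>a} through $g$ then yields the three displayed intervals: $y_+\ge e/a$ gives $r_+\ge\sqrt{ac}(e/a+a/e)$ in case 1; $y_+\in[1,e/a)$ and $y_-\in(-e/a,-1]$ give the two half-open intervals in case 2; and $y_-\le-e/a$ gives $r_-\le-\sqrt{ac}(e/a+a/e)$ in case 3. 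The computations are entirely routine, which is why the authors call the proof easy; the only points needing care, the closest thing to an obstacle, are bookkeeping: checking that the algebraic collapse reproduces Equation \ref{eq:rpm_def} with the correct signs, and confirming that monotonicity sends the positive outlier $y_+$ to the largest eigenvalue $r_1$ and the negative outlier $y_-$ to the smallest $r_n$, consistent with the decreasing family $2\sqrt{ac}\cos\psi_\ell$.
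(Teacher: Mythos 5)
Your proposal is correct and follows exactly the route the paper intends: the paper omits this proof, stating it follows ``in the same manner'' as Theorem \ref{thm:-a.e.a}, namely by pushing the roots from Proposition \ref{prop:e>a} through $r=\sqrt{ac}(y+y^{-1})$, substituting $a/c$ for $\tau^2$ in the case thresholds, and simplifying $\sqrt{ac}(y_\pm+y_\pm^{-1})$ to obtain $r_\pm$. Your Vieta computation $y_\pm^{-1}=-\tfrac ae y_\mp$ and the monotonicity argument for the interval bounds correctly supply the details the authors left implicit.
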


\begin{theorem} Let $e<-a$ and $a$, $c$, $d$, $e$ fixed. Then for $n$ large enough the $n+1$ eigenvalues $\{r_i\}_{i=0}^n$ of the matrix $A$ are the following. First, $r_0=b$. The other $n$ eigenvalues are:
\begin{equation*}
\begin{array}{cccl}
1.& d \leq -(a-e)\sqrt{\frac{c}{a}} &:& r_\ell=2\sqrt{ac}\cos \psi_\ell\; ,\; \ell\in\{2,\cdots n\}
,\; r_1\approx r_-\\[.15cm]
2.&-(a-e)\sqrt{\frac{c}{a}} < d < (a-e)\sqrt{\frac{c}{a}} &:& r_\ell=2\sqrt{ac}\cos \psi_\ell\; , \; \ell\in\{2,\cdots n-1\},\; r_1\approx r_+,  r_n\approx r_-\\[.15cm]
3. & (a-e)\sqrt{\frac{c}{a}} \leq d &:& r_\ell=2\sqrt{ac}\cos \psi_\ell\; ,\; \ell\in\{1,\cdots n-1\}
,\; r_n\approx r_+
\end{array}
\end{equation*}
Furthermore we also have that in these cases:
\begin{equation*}
\begin{array}{cccl}
1.  & d \leq -(a-e)\sqrt{\frac{c}{a}} &:& r_-\leq \sqrt{ac}\left(\dfrac ea + \dfrac ae\right)\\[.15cm]
2a. & -(a-e)\sqrt{\frac{c}{a}} < d\leq -2\sqrt{|ce|} &:& r_-,r_+\in \left(\sqrt{ac}\left(\dfrac ea + \dfrac ae\right),-2\sqrt{ac} \right]\; \\[.15cm]
2b. & -2\sqrt{|ce|} < d < 2\sqrt{|ce|} &:&  r_-,r_+ \;\mathrm{not \; real}\\[.15cm]
2c. & 2\sqrt{|ce|}\leq d < (a-e)\sqrt{\frac{c}{a}} & : &
r_-,r_+ \in \left[2\sqrt{ac}, -\sqrt{ac}\left(\dfrac ea + \dfrac ae\right)\right)\\[.15cm]
3.  & (a-e)\sqrt{\frac{c}{a}}\leq d &:&  r_+ \geq -\sqrt{ac}\left(\dfrac ea + \dfrac ae\right)
\end{array}
\end{equation*}
\label{thm:e<-a}
\end{theorem}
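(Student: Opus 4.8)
The plan is to transfer the root locations of Proposition~\ref{prop:e<-a} through the eigenpair correspondence of Lemma~\ref{lem:polynomial}, exactly as in Theorem~\ref{thm:-a.e.a}. Recall from the opening of Section~\ref{chap:prelim} that $A$ is block lower triangular, so the $1\times 1$ block contributes the isolated eigenvalue $r_0=b$ while the remaining $n$ eigenvalues are those of the reduced matrix $Q$. By Equation~\ref{eq:eigenpair} every root $y$ of the polynomial in Equation~\ref{eq:polynomial} yields an eigenvalue $r=\sqrt{ac}\,(y+y^{-1})$ of $Q$, and by Lemma~\ref{lem:invariants} a root and its reciprocal give the same eigenvalue; hence the $n$ reciprocal pairs supplied by Proposition~\ref{prop:e<-a} produce exactly $n$ eigenvalues. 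Substituting $\tau^2=a/c$ turns the hypotheses on $d\tau$ in that proposition into the conditions on $d$ stated in the theorem, and the index structure ($\ell\in\{2,\dots,n\}$, $\{2,\dots,n-1\}$, or $\{1,\dots,n-1\}$) is copied verbatim.

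For the roots on the unit circle, writing $y_\ell=e^{i\phi_\ell}$ gives at once $r_\ell=\sqrt{ac}\,(e^{i\phi_\ell}+e^{-i\phi_\ell})=2\sqrt{ac}\cos\phi_\ell$. Since $\phi_\ell$ and $\psi_\ell$ both solve Equation~\ref{eq:cotangent} and differ only in the open-versus-half-open convention for the interval $I_\ell$ (Definitions~\ref{def:phi_ell-etc} and~\ref{def:psi_ell}), this is the claimed $r_\ell=2\sqrt{ac}\cos\psi_\ell$.

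For the off-circle roots I would check the closed form~\ref{eq:rpm_def}. The product relation $y_+y_-=-e/a$ from Definition~\ref{def:phi_ell-etc} gives $y_\pm^{-1}=-\tfrac{a}{e}\,y_\mp$, so $r_\pm=\sqrt{ac}\,(y_\pm-\tfrac{a}{e}y_\mp)$; inserting $y_\pm=\tfrac{1}{2\sqrt{ac}}(d\pm\sqrt{d^2+4ce})$ and collecting the $d$- and $\sqrt{d^2+4ce}$-terms reproduces $r_\pm=\tfrac12[(1-\tfrac{a}{e})d\pm(1+\tfrac{a}{e})\sqrt{d^2+4ce}]$, which is Equation~\ref{eq:rpm_def}. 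In subcase~2b the hypothesis $|d|<2\sqrt{|ce|}$ is equivalent to $d^2+4ce<0$, so $r_\pm$ are genuinely non-real, as asserted.

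The ``Furthermore'' ranges then follow from the monotonicity of $g(y)\equiv\sqrt{ac}\,(y+y^{-1})$, whose derivative $\sqrt{ac}\,(1-y^{-2})$ is positive on each of $(-\infty,-1)$ and $(1,\infty)$. With the boundary values $g(\pm1)=\pm2\sqrt{ac}$ and $g(\mp e/a)=\mp\sqrt{ac}(\tfrac{e}{a}+\tfrac{a}{e})$, pushing the intervals that Proposition~\ref{prop:e<-a} provides for $y_\pm$ through $g$ ($y_-\le e/a$ in case~1, subintervals of $[e/a,-1]$ and $[1,-e/a]$ in cases~2a and~2c, and $y_+\ge-e/a$ in case~3) gives the stated bounds on $r_\pm$; the increasing order $r_-\le r_+$ inherited from $g$ is consistent with the labels $r_n\approx r_-$, $r_1\approx r_+$. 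The only delicate point is the bookkeeping: one must record which branch of $g$ each root sits on, keep the open and closed endpoints straight across subcases~2a--2c, and pair the complex subcase~2b correctly. Everything else is direct substitution, which is why the argument is short.
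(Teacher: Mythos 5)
Your proof is correct and is precisely the argument the paper intends: the paper omits the proof of Theorem \ref{thm:e<-a}, saying only that it follows from Proposition \ref{prop:e<-a} in the same manner that Theorem \ref{thm:-a.e.a} follows from Proposition \ref{prop:-a<e<a}, namely by applying Lemma \ref{lem:polynomial}, substituting $\tau^2=a/c$, and simplifying $\sqrt{ac}\,(y_\pm+y_\pm^{-1})$ to obtain $r_\pm$. Your added details---verifying Equation \ref{eq:rpm_def} via $y_+y_-=-e/a$, checking non-reality in case 2b from $d^2+4ce<0$, and using the monotonicity of $y\mapsto\sqrt{ac}\,(y+y^{-1})$ on $|y|>1$ to transfer the root intervals to the eigenvalue bounds---correctly fill in exactly the computations the paper declares easy.
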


As an illustration of these ideas we plot the solutions of Equation \ref{eq:polynomial} and
the spectrum of $A$ in the case where $a=c=1$, $e=-9/4$. We take $d\in\{2.95,3.05,3.3\}$ so that
(since $\tau=1$) we are in case 2b, 2c, and 3 respectively of Proposition \ref{prop:e<-a}
and Theorem \ref{thm:e<-a}. The results can be found in Figure \ref{fig:illustration}.
These results are numerical: the first three figures were obtained with the MAPLE ``fsolve" routine,
the last three were obtained from the former by applying Equation \ref{eq:eigenpair} to the roots to get the eigenvalues.
We took $n=100$. The eigenvalue $b$ of the matrix $A$ is not displayed.
\begin{figure}[pbth]
\center
\includegraphics[width=2in]{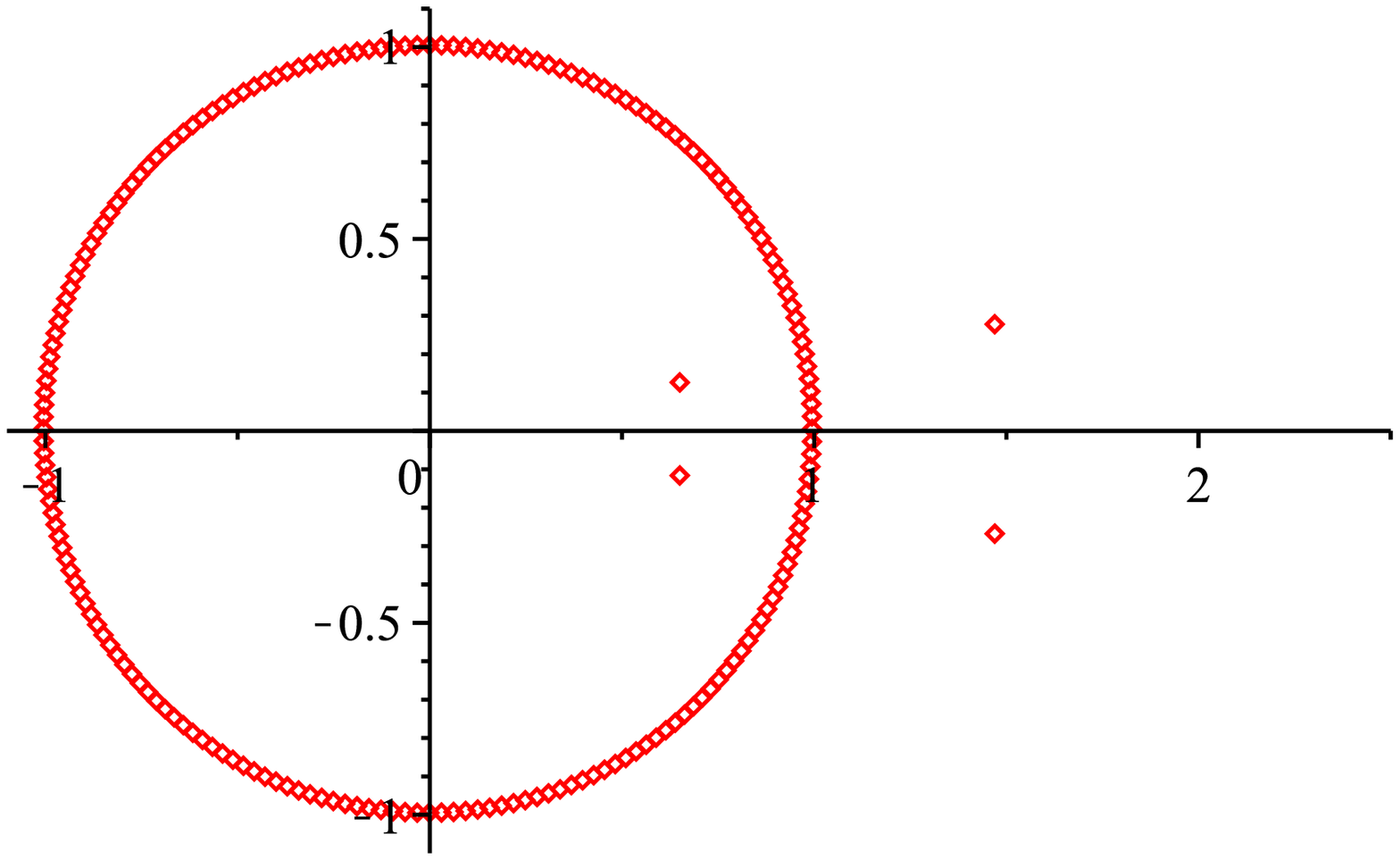}
\includegraphics[width=2in]{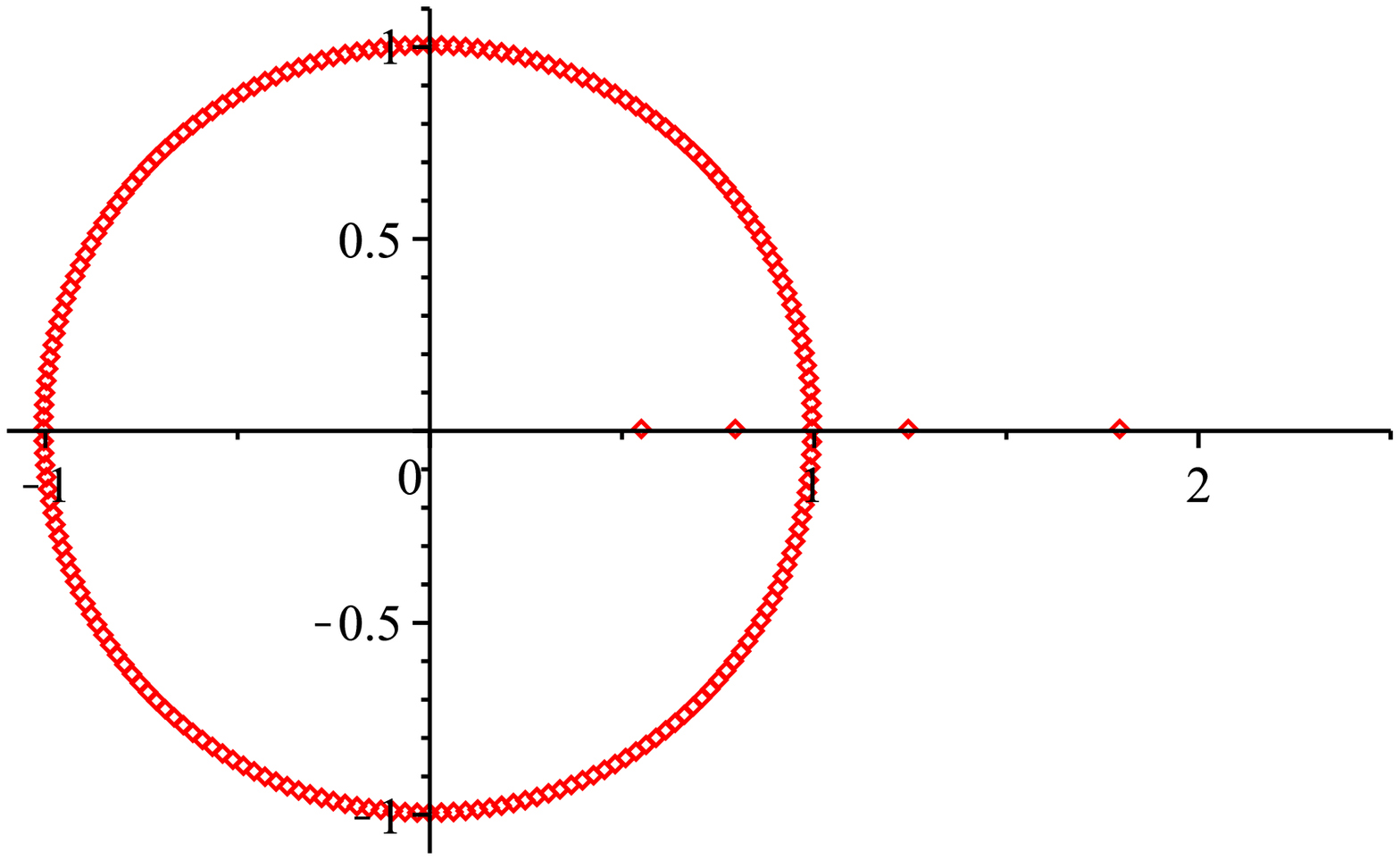}
\includegraphics[width=2in]{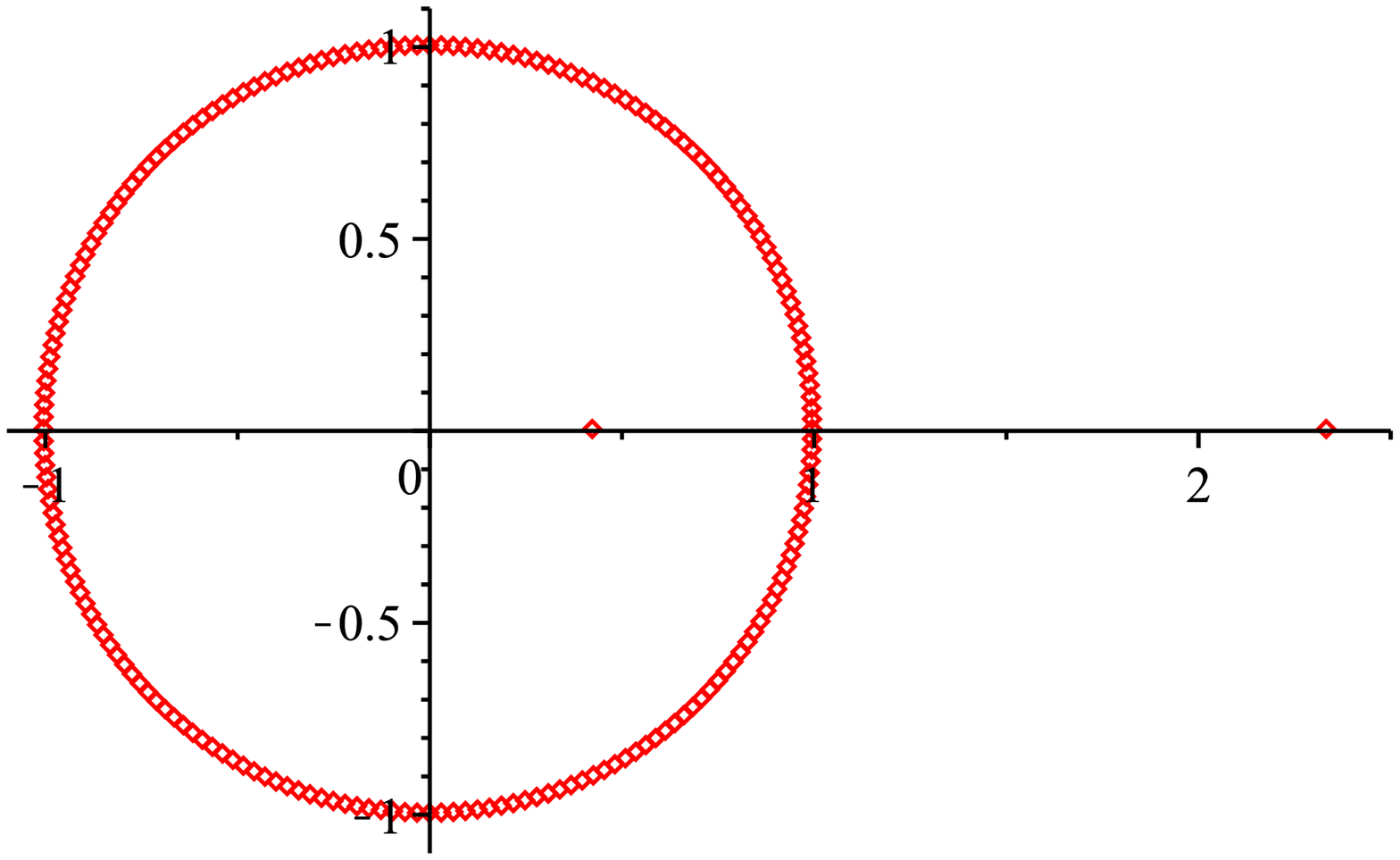}

\includegraphics[width=2in]{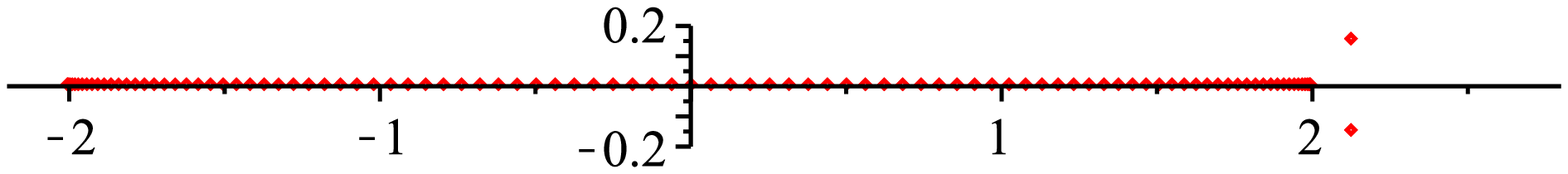}
\includegraphics[width=2in]{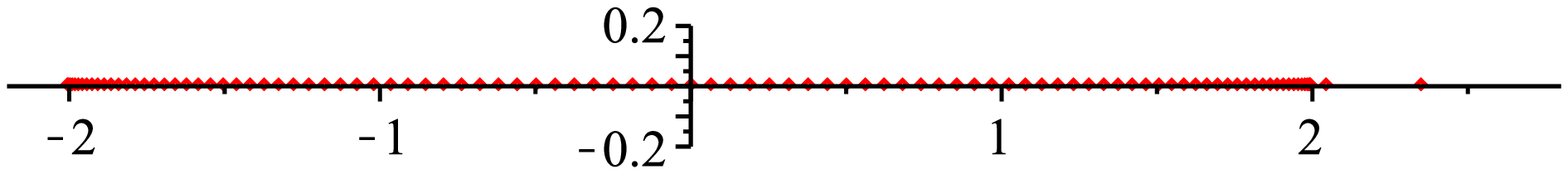}
\includegraphics[width=2in]{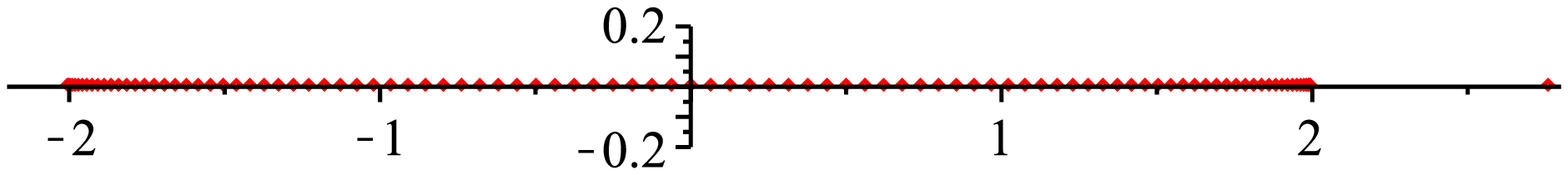}
\caption{ \emph{Numerical illustration of the roots of the polynomial
    (\ref{eq:polynomial}) (top) and the eigenvalues (bottom), for
    three different parameter values (given in text)}
\label{fig:illustration}
}
%
\end{figure}

\section{The Decentralized Case}
\label{chap:decentralized}

We look at the decentralized case defined in Definition \ref{def:decentralized}.

\begin{lemma} In the decentralized case the eigenvalues $r_\pm$ of Definition
\ref{def:psi_ell} become
\begin{align*}
r_+ = a+c \quad \mathrm{and} \quad r_-=-\left(\dfrac{ac}{e}+e\right)
& \text{, \quad if $c+e>0$ }\\
r_+ =  -\left(\dfrac{ac}{e}+e\right) \quad \mathrm{and} \quad r_-=a+c
& \text{, \quad if $c+e<0$ }
\end{align*}
\label{lem:special-evals}
\begin{proof}
Substituting $d=c-e$ into (\ref{eq:rpm_def}) gives $r_\pm =
\frac{1}{2}\left[(1-\frac{a}{c})(c-e) \pm
  (1+\frac{a}{e})\sqrt{(c+e)^2}\right]$, using $\sqrt{(c+e)^2}=|c+e|$ and
    simplifying gives the result.
\end{proof}
\end{lemma}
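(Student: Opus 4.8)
The plan is to reduce the whole computation to the single algebraic observation that decentralization turns the discriminant in (\ref{eq:rpm_def}) into a perfect square. By Definition \ref{def:decentralized} the decentralized condition $c = e+d$ is the same as $d = c - e$. Substituting this into the radicand gives
\begin{equation*}
d^2 + 4ce = (c-e)^2 + 4ce = c^2 + 2ce + e^2 = (c+e)^2,
\end{equation*}
so that $\sqrt{d^2+4ce} = |c+e|$; here I use that the branch convention of Definition \ref{def:phi_ell-etc} makes $\sqrt{\cdot}$ the ordinary non-negative real root when the radicand is a non-negative real. This is the only step that uses anything specific to the problem, and it is what will ultimately force the case split in the statement.

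Next I would substitute $d = c-e$ and $\sqrt{d^2+4ce}=|c+e|$ directly into the definition (\ref{eq:rpm_def}) and expand the two summands:
\begin{align*}
\left(1 - \tfrac{a}{e}\right)(c - e) &= (a + c) - \left(e + \tfrac{ac}{e}\right), \\
\left(1 + \tfrac{a}{e}\right)(c + e) &= (a + c) + \left(e + \tfrac{ac}{e}\right).
\end{align*}
When $c + e > 0$ we have $|c+e| = c+e$, so the $\pm$ branch attaches these two expressions with a genuine sign; adding them yields $r_+ = \tfrac12\bigl(2(a+c)\bigr) = a+c$, and subtracting yields $r_- = -\bigl(\tfrac{ac}{e} + e\bigr)$, which is exactly the first displayed case of the lemma.

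The only genuinely non-routine point, and the main (though mild) obstacle, is the handling of the absolute value, since it is precisely the sign of $c+e$ that determines which of the two roots the ``$+$'' branch picks out. When $c + e < 0$ we instead have $|c+e| = -(c+e)$, so the term carrying the $\pm$ changes sign relative to the previous computation; the net effect is simply to interchange the two values found above, giving $r_+ = -\bigl(\tfrac{ac}{e} + e\bigr)$ and $r_- = a+c$, the second case. I expect no difficulty beyond carefully tracking this sign, and the apparent asymmetry between the two cases is entirely an artifact of which root the ``$+$'' branch selects on either side of $c+e = 0$; once the perfect-square identity is in hand the rest is elementary algebra.
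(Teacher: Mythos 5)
Your proof is correct and takes essentially the same route as the paper: substitute $d=c-e$ into (\ref{eq:rpm_def}), observe $d^2+4ce=(c+e)^2$ so $\sqrt{d^2+4ce}=|c+e|$, and split on the sign of $c+e$; you merely spell out the elementary algebra that the paper compresses into ``simplifying gives the result.'' Incidentally, your coefficient $\bigl(1-\tfrac{a}{e}\bigr)$ matches the definition in (\ref{eq:rpm_def}), whereas the paper's displayed intermediate expression contains a typo $\bigl(1-\tfrac{a}{c}\bigr)$ at that spot.
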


\begin{theorem} Let $a$, $c$, $d$, $e$ fixed so that $A$ is decentralized.
Then for $n$ large enough the $n+1$ eigenvalues
$\{r_i\}_{i=0}^n$ of the matrix $A$ are the following. First, $r_0=a+c$. $n-2$ eigenvalues are $r_\ell=2\sqrt{ac}\cos \psi_\ell$ for $\ell\in\{2,\cdots n-1\}$ where the $\psi_\ell$ are
solutions of Equation \ref{eq:cotangent}. The remaining two eigenvalues
($\ell=1$ and $\ell=n$) either also satisfy that formula or else are exponentially close
(in $n$) to the ones given in the table below. In the table we list the domain of $e$
left of the colon, and the appropriate special eigenvalues (if any) right of the colon.
{\small
\begin{equation*}
\setlength\arraycolsep{3pt}
\begin{array}{|c||c|c|c|}
\hline
---& 0<c<a & c=a & a<c \\
\hline\hline
e<-a & \begin{array}{cc} (-\infty,-a) :& -\left(\frac{ac}{e}+e\right) \end{array}
& \begin{array}{cc} (-\infty,-a) :& -a\left(\frac{a}{e}+\frac ea\right) \end{array}
& \begin{array}{cc} \left(-\infty, -\sqrt{ac}\right) :&  \left\{ a+c, -\left(\frac{ac}{e}+e\right)\right\}\\
\left[-\sqrt{ac},-a\right) :&
a+c 
\end{array}\\
\hline
|e|\leq a
& \begin{array}{cc} \left[-a,-\sqrt{ac}\right) :&
-(\frac{ac}{e}+e) 
\\
\left[-\sqrt{ac},\sqrt{ac}\right] :& \varnothing \\
\left(\sqrt{ac},a\right] :&
-(\frac{ac}{e}+e) 
\end{array}
&  \begin{array}{cc} [-a,a] :& \varnothing \end{array}
  & \begin{array}{cc} [-a,a] :& a+c  \end{array}  \\
\hline
a < e & \begin{array}{cc} (a,\infty) : & -(\frac{ac}{e}+e)  \end{array}
& \begin{array}{cc} (a,\infty)  :& \left\{-a\left(\frac ae+\frac ea\right),2a\right\} \end{array}
& \begin{array}{cc} (a,\sqrt{ac}] :& a+c\\
\left(\sqrt{ac},\infty\right) :& \left\{-\left(\frac{ac}{e}+e\right), a+c \right\} \end{array}\\
\hline
\end{array}
\end{equation*}           }
\label{thm:decentralized}
\end{theorem}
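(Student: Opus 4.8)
The plan is to read the table directly off the three general spectral theorems (Theorems \ref{thm:-a.e.a}, \ref{thm:e>a}, and \ref{thm:e<-a}) after imposing the decentralized constraint $d=c-e$ and invoking Lemma \ref{lem:special-evals}. The value $r_0=b=a+c$ is immediate from Proposition \ref{prop:eval=b}, and the bulk family $r_\ell=2\sqrt{ac}\cos\psi_\ell$, $\ell\in\{2,\dots,n-1\}$, is common to all three theorems; so the whole content of the table is the identification of the two remaining eigenvalues (indices $\ell=1$ and $\ell=n$). I would organize the proof by the three rows, applying Theorem \ref{thm:e<-a} when $e<-a$, Theorem \ref{thm:-a.e.a} when $|e|\le a$, and Theorem \ref{thm:e>a} when $e>a$. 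The central simplification is that, by Lemma \ref{lem:special-evals}, the unordered pair $\{r_+,r_-\}$ is always $\{\,a+c,\ -(ac/e+e)\,\}$, independently of the sign of $c+e$; this is exactly why a two-element cell can be recorded without deciding which of $r_\pm$ is which.

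The key computation is to convert each case threshold of the general theorems, of the form $d$ against $\pm(a-e)\sqrt{c/a}$, into the inequalities appearing in the table. Setting $s=\sqrt{c/a}$ (so that $as=\sqrt{ac}$) and substituting $d=c-e$, I record the two factorizations
\begin{equation*}
c-e-(a-e)\sqrt{c/a}=(s-1)(\sqrt{ac}+e),\qquad
c-e+(a-e)\sqrt{c/a}=(s+1)(\sqrt{ac}-e).
\end{equation*}
Since $s+1>0$, the sign of the first gap is governed by $s-1$ together with $\sqrt{ac}+e$, and of the second by $\sqrt{ac}-e$. This produces at once the column split $0<c<a$, $c=a$, $a<c$ (the sign of $s-1$) and the internal subdivisions at $e=\pm\sqrt{ac}$, and it tells me which case of the relevant theorem is active in each cell. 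One further remark dispatches the $e<-a$ row: there $d=c-e=c+|e|\ge 2\sqrt{c|e|}=2\sqrt{|ce|}$ by the arithmetic--geometric mean inequality, so the non-real sub-case (2b of Theorem \ref{thm:e<-a}) never arises and every special eigenvalue is real, in agreement with the real expressions $a+c$ and $-(ac/e+e)$. With these observations, each of the nine cells reduces to reading off the active case and inserting the corresponding $r_\pm$ from Lemma \ref{lem:special-evals}.

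The main obstacle is the degenerate behaviour on the column $c=a$ and along the curves $e=\pm\sqrt{ac}$, precisely where one of the factorizations vanishes and $d$ sits exactly on a case boundary of the asymptotic theorems. On such a boundary a special eigenvalue $r_\pm$ merges with the band edge $\pm2\sqrt{ac}$; in particular for $c=a$ one has $a+c=2\sqrt{ac}=2a$, so the isolated value and the endpoint of the continuous band coincide, which is why the cell $c=a$, $a<e$ must list both an isolated $-(ac/e+e)$ and the edge value $2a$ while the neighbouring $|e|\le a$ cell is empty. Rather than force the asymptotic theorems onto their boundaries, I would treat the whole $c=a$ column by factoring Equation \ref{eq:polynomial} directly, as in the examples of Section \ref{chap:prelim}: with $\tau=1$ and $d=a-e$ both quadratics $ay^2-d\tau y-e$ and $ey^2+d\tau y-a$ acquire the factor $(y-1)$, and the residual degree-$2n$ factor is palindromic and vanishes at $y=-1$, so after discarding the trivial roots at $\pm1$ (Lemma \ref{lem:invariants}) the remaining factor is solvable exactly in $w=y+1/y$. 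This exhibits the isolated root near $y=-e/a$ with $|y|>1$ (consistent with Lemma \ref{lem:App2}) and the top band eigenvalue tending to $2\sqrt{ac}$, reproducing the reported values. The off-diagonal columns $c\neq a$ then follow from the factorizations above with no boundary ambiguity.
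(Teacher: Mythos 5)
Your overall strategy coincides with the paper's own proof: substitute $d=c-e$, determine which case of Theorems \ref{thm:-a.e.a}, \ref{thm:e>a}, \ref{thm:e<-a} is active in each of the nine cells, and insert the values of $r_\pm$ from Lemma \ref{lem:special-evals}. Your AM--GM step excluding case 2b of Theorem \ref{thm:e<-a} in the row $e<-a$ is literally the paper's argument, and your pair of factorizations $c-e-(a-e)\sqrt{c/a}=(s-1)(\sqrt{ac}+e)$ and $c-e+(a-e)\sqrt{c/a}=(s+1)(\sqrt{ac}-e)$ is a cleaner, unified packaging of the computation the paper carries out column by column around its Equation \ref{eq:row1}; both deliver the same column split at $s=1$ and the internal thresholds at $e=\pm\sqrt{ac}$. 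However, there is one genuine omission: in the majority of cells only \emph{one} of $r_\pm$ occurs as a special eigenvalue, and then knowing the unordered pair $\{r_+,r_-\}=\{a+c,\,-(\frac{ac}{e}+e)\}$ is not enough --- you must decide which of the two expressions the produced eigenvalue equals, and by Lemma \ref{lem:special-evals} that hinges on the sign of $c+e$, which varies across cells. The paper devotes a paragraph to exactly this (e.g.\ $c+e<0$ whenever $e<-a$ and $c\leq a$, while $c+e>0$ for $a<c$ and $e\in[-\sqrt{ac},-a)$). Your ``unordered pair'' remark correctly dispatches the two-element cells, but the single-eigenvalue cells still need these (easy) sign checks, which your proposal never performs.

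Your alternative treatment of the $c=a$ column is the other weak point. The factorization is right: with $\tau=1$ and $d=a-e$ the polynomial in Equation \ref{eq:polynomial} becomes $(y-1)\bigl[(ay+e)y^{2n}+ey+a\bigr]$, the bracketed factor is palindromic and vanishes at $y=-1$, and you are also right that this column sits exactly on the (non-strict) case boundaries of the asymptotic theorems --- a subtlety the paper glosses over by reading its theorems at equality. But the claim that after discarding $y=\pm1$ the residue is ``solvable exactly in $w=y+1/y$'' is false for general $n$: the palindromic reduction yields a degree-$n$ polynomial in $w$ (already $aw^{2}+(e-a)w-(a+e)$ at $n=2$), which has no closed-form roots in general. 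So this route does not bypass the asymptotic machinery; to ``exhibit'' the isolated root you must still apply Lemma \ref{lem:App2} to the factored quadratic $(y-1)(ay+e)$, whose only root of modulus greater than $1$ is $-e/a$ (its other root $y=1$ lies on the unit circle, outside the hypotheses of Lemma \ref{lem:App2} --- which is precisely why no second isolated root appears there and the companion eigenvalue is the band value $2\sqrt{ac}\cos\psi_1$ drifting to the edge $2a$, located via the branch analysis of Equation \ref{eq:cotangent}). With the exact-solvability claim replaced by these two asymptotic arguments, your treatment of the $c=a$ column becomes correct and reproduces the table, in agreement with what the paper obtains from its general theorems.
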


\begin{proof} We need to check in each case of the above table, which case of the
appropriate Theorem in Section \ref{chap:spectrum} applies.
The result of that process is given in the table below.
Each entry is a list of domains for e to the left of the colon together with the
subcases of the relevant Theorem (to the right of the colon).
Once we know which case applies, we list the appropriate special eigenvalues (if any)
specified by those Theorems and Lemma \ref{lem:special-evals} and that gives the table
in the Theorem.
{\small
\begin{equation*}
\begin{array}{|c||c|c|c|}
\hline
---& 0<c<a & c=a & a<c \\
\hline\hline
e<-a; \mathrm{\;Theorem\; \ref{thm:e<-a}} & \begin{array}{cc} (-\infty,-a) :& \text{\ref{thm:e<-a}-3} \end{array}
& \begin{array}{cc} (-\infty,-a) :& \text{\ref{thm:e<-a}-3} \end{array}
& \begin{array}{cc} \left(-\infty, -\sqrt{ac}\right) :&\text{\ref{thm:e<-a}-2c}\\
\left[-\sqrt{ac},-a\right) :& \text{\ref{thm:e<-a}-3} \end{array}\\
\hline
|e|\leq a; \mathrm{\;Theorem\; \ref{thm:-a.e.a}}
& \begin{array}{cc} \left[-a,-\sqrt{ac}\right) :& \text{\ref{thm:-a.e.a}-1} \\
\left[-\sqrt{ac},\sqrt{ac}\right] :&\text{\ref{thm:-a.e.a}-2} \\
(\sqrt{ac},a] :& \text{\ref{thm:-a.e.a}-3}         \end{array}
&  \begin{array}{cc} [-a,a] :& \text{\ref{thm:-a.e.a}-2}  \end{array}
  & \begin{array}{cc} [-a,a] :& \text{\ref{thm:-a.e.a}-1}  \end{array}  \\
\hline
a < e ; \mathrm{\;Theorem\; \ref{thm:e>a}} & \begin{array}{cc} (a,\infty) :& \text{\ref{thm:e>a}-3} \end{array}
& \begin{array}{cc} (a,\infty) :& \text{\ref{thm:e>a}-2}  \end{array}
& \begin{array}{cc} [a,\sqrt{ac}] :& \text{\ref{thm:e>a}-1} \\
\left(\sqrt{ac},\infty\right) :& \text{\ref{thm:e>a}-2}  \end{array}\\
\hline
\end{array}
\end{equation*}         }

The verification of the table in this proof is a tedious process. We will outline how to do
that when $e<-a$. For other values of $e$ the process is very similar.

{
Since $A$ is decentralized we have (Definition \ref{def:decentralized}) $d=c-e$.
Recall that $a$ and $c$ are positive. So:
\begin{equation*}
a>0, \quad c>0, \quad e<-a<0, \quad d=c-e
\end{equation*}

Expanding $0\leq (c+e)^2$ and subtracting $4ce$ gives $-4ce \leq
c^2-2ce+c^2 = (c-e)^2$. As $e$ is negative, taking square roots
implies $2\sqrt{c|e|} \leq c-e = d$. This implies that conditions for
Theorem \ref{thm:e<-a} cases \ref{thm:e<-a}-2c or \ref{thm:e<-a}-3
must hold. Case \ref{thm:e<-a}-3 applies if $(a-e)\sqrt{\frac{c}{a}}
  \leq c-e$, using $c=\sqrt{ac}\sqrt{\frac{c}{a}}$ and
  $a\sqrt{\frac{c}{a}} = \sqrt{ac}$, this condition is equivalent to
\begin{equation}\label{eq:row1}
-\sqrt{ac}\left(\sqrt{\frac{c}{a}} - 1\right) \leq e \left(\sqrt{\frac{c}{a}} -1\right) .
\end{equation}

If $c<a$ (corresponding to the first column of the above tables),
$\sqrt{\frac{c}{a}}-1<0$ so (\ref{eq:row1}) holds if $-\sqrt{ac} \geq
e$, so case \ref{thm:e<-a}-3 holds for all $e\leq
-\sqrt{ac}$. However, the appropriate set for $e$ is actually smaller
as we already have restricted e to $e<-a$, and $-a < -\sqrt{ac}$ (as
$c<a$). So case \ref{thm:e<-a}-3 holds for $e\in(-\infty,-a)$.

If $c=a$ (corresponding to the second column of the above tables),
(\ref{eq:row1}) reduces to $0\leq 0$ which is true, implying that
case \ref{thm:e<-a}-3 holds for all $e\in (-\infty,-a)$.

If $c>a$ (corresponding to the third column of the above
tables),$\sqrt{\frac{c}{a}}-1>0$ so (\ref{eq:row1}) holds if
$-\sqrt{ac}\leq e$. This implies that case \ref{thm:e<-a}-3 holds for
$e\in (-\sqrt{ac},-a)$, (which is not vacuous as $\sqrt{ac}<a$ for
$a<c$), otherwise case \ref{thm:e<-a}-2c holds for
$e\in(-\infty,-\sqrt{ac})$.

When translating from the table of conditions to the results for the
``special eigenvalues'', note that the case \ref{thm:e<-a}-3 produces
the eigenvalue $r_+$, whose expression depends on the sign of
$c+e$. It is straightforward to show that $c+e$ must be negative if
$e<-a$ and $c\leq a$ (accounting for the first two columns of the
above tables), and that $c+e$ is positive if $a<c$ but $e \in
[-\sqrt{ac},-a)$ (accounting for the second case in the third column
of the tables above). Note that the sign of $c+e$ is not determined
for $e\in (-\infty,-\sqrt{ac})$, but this does not affect the
eigenvalues in this case as case \ref{thm:e<-a}-2c produces both $r_+$
and $r_-$ as special eigenvalues.

}
This finishes the classification of the decentralized spectra when $e<-a$. The
strategy when $e\geq -a$ is the same.
\end{proof}

A special case of this result, namely $\rho\in(0,1)$ and $a=1-\rho$, $c=\rho$, $e=\rho$
and $d=0$, a was proved in \cite{FV}.

\section{Applications to Decentralized Systems of Differential Equations}
\label{chap:differential equations}

In this section we will take up the asymptotic stability of the
consensus (first order) and flocking (second order) given in Equations
\ref{eq:first-order} and \ref{eq:second-order}. In particular we prove
that for both of these systems asymptotic stability does not depend on
boundary conditions if these are `reasonable' (in this case $|e|<a$).

The first order system has an eigenvalue 0, and the second order
system has an eigenvalue 0 of multiplicity at least 2. These
eigenvalues are associated with the solutions given in Equation
\ref{eq:coherent solutions}.  The systems are called asymptotically
stable if those eigenvalues have multiplicities exactly 1 and 2,
respectively, and if all other eigenvalues have negative real part.

\begin{corollary} Let $a$, $c$, $d$, $e$ fixed, and so that $A$ is
  decentralized.  Then for $n$ large enough the system in Equation
  \ref{eq:first-order} is asymptotically stable if $a+e>0$, and
  asymptotically unstable if $a+e<0$ and $c+e\neq 0$.
\label{cory:first-order}
\end{corollary}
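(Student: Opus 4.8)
The plan is to connect the asymptotic stability of the first-order system $\dot{x}=-L(x-h)$ directly to the spectrum of the matrix $A$ computed in Theorem~\ref{thm:decentralized}. In the decentralized case we have $L=(a+c)I-A$ by Equation~\ref{eq:Laplacian}, so if $r$ is an eigenvalue of $A$ then $\lambda=(a+c)-r$ is an eigenvalue of $L$, and the eigenvalues of $-L$ governing the linear system are $\mu=r-(a+c)$. Asymptotic stability, as defined just above the corollary, requires that the coherent solution eigenvalue (here the eigenvalue $0$ of $-L$, equivalently the eigenvalue $r_0=a+c$ of $A$) be simple, and that every other eigenvalue $\mu$ of $-L$ have strictly negative real part, i.e.\ that every other eigenvalue $r$ of $A$ satisfy $\operatorname{Re}(r)<a+c$.

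First I would invoke Theorem~\ref{thm:decentralized}. The bulk of the spectrum consists of the values $r_\ell=2\sqrt{ac}\cos\psi_\ell$ for $\ell\in\{2,\dots,n-1\}$; since $\psi_\ell$ lies strictly inside $(0,\pi)$, each such $r_\ell$ is real and satisfies $|r_\ell|<2\sqrt{ac}<a+c$ (the last inequality being the AM--GM bound, strict unless $a=c$, and even then $2\sqrt{ac}=a+c$ is attained only in the limit $\psi\to 0$, not by an interior $\psi_\ell$). So these eigenvalues automatically have real part below $a+c$ and contribute stable modes. The analysis therefore reduces to the two ``special'' eigenvalues at $\ell=1$ and $\ell=n$, together with confirming that $r_0=a+c$ is simple.

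Next I would specialize to the hypothesis $|e|<a$, which places us in the middle row of the table of Theorem~\ref{thm:decentralized} (the $|e|\leq a$ case, governed by Theorem~\ref{thm:-a.e.a}). Under $a+e>0$, equivalently $e>-a$, I would read off which of the three subcases of Theorem~\ref{thm:-a.e.a} applies, using $d=c-e$ from decentralization. The key quantities are the special eigenvalues listed in the table, namely $r_+=a+c$ and $r_-=-(ac/e+e)$ (with signs depending on $\operatorname{sgn}(c+e)$, via Lemma~\ref{lem:special-evals}). The crucial points to verify are: (i) the value $a+c$ appears among the eigenvalues exactly once, so that $r_0=a+c$ is simple and accounts for the coherent solution; and (ii) the other special eigenvalue $r_-=-(ac/e+e)$ satisfies $\operatorname{Re}(r_-)<a+c$. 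For the latter, since $|e|<a$ forces $c+e$'s sign to be controlled and $r_-$ to be real, I expect the inequality $-(ac/e+e)<a+c$ to follow from an elementary manipulation; when $e>0$ it reads $ac+e^2+e(a+c)>0$, which is manifestly positive, and the case $-a<e<0$ would be handled analogously using that $|e|<a$ keeps $r_-$ below $a+c$. For the instability claim, when $a+e<0$ (so $e<-a$) and $c+e\neq 0$, I would instead use Theorem~\ref{thm:e<-a} and Lemma~\ref{lem:special-evals} to exhibit a special eigenvalue strictly exceeding $a+c$, producing a mode of $-L$ with positive real part and hence instability.

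The main obstacle I anticipate is \textbf{bookkeeping the sign conditions cleanly}: the expressions for $r_\pm$ in Lemma~\ref{lem:special-evals} bifurcate on the sign of $c+e$, and the instability direction requires carefully identifying \emph{which} special eigenvalue exceeds $a+c$ and confirming it is genuinely larger (the condition $c+e\neq 0$ is precisely what prevents the degenerate coincidence $r_-=-(ac/e+e)$ from collapsing onto $a+c$ or onto another eigenvalue). The substantive content is small once the spectrum is known, so the real care lies in checking that exactly one eigenvalue equals $a+c$ (guaranteeing the multiplicity condition in the definition of asymptotic stability) rather than in any hard estimate.
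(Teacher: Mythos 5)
Your setup (eigenvalues of $-L$ are $r-(a+c)$, bulk eigenvalues $2\sqrt{ac}\cos\psi_\ell-(a+c)<0$ are harmless, so everything reduces to the special eigenvalues) matches the paper's proof. But there are two genuine gaps. First, a misreading of the hypothesis: the stability claim is for $a+e>0$, i.e.\ $e>-a$, not $|e|<a$. Your plan specializes to the middle row of the table in Theorem \ref{thm:decentralized} and never treats $e\geq a$, where Theorem \ref{thm:e>a} governs and where (for $c\geq a$) the value $a+c$ again appears as a special eigenvalue; those cases are part of the statement and the paper's proof covers them by scanning the whole table for whether $-\frac{(a+e)(c+e)}{e}$ can be positive.

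Second, and more fundamentally: the special eigenvalues in Theorem \ref{thm:decentralized} are only \emph{exponentially close} to the tabulated values ($r_\ell\approx\cdots$), not equal to them. Whenever $a+c$ appears in the table, the matrix $A$ has \emph{two} eigenvalues near $a+c$: the exact one $r_0=b=a+c$, and a second one within $O(\kappa^{-n})$ of $a+c$, whose corresponding eigenvalue of $-L$ sits exponentially close to $0$ with \emph{a priori unknown sign}. Your framing (``check that exactly one eigenvalue equals $a+c$'') misses this: no elementary inequality on the limiting values can decide stability, because the limit sits exactly on the stability boundary. The paper resolves it with the perturbation-sign result of Corollaries \ref{cory:App2} and \ref{cory2:App2}, which give $\mathrm{sgn}(r_1-r_0)=-\mathrm{sgn}(a+e)$: for $a+e>0$ the nearby eigenvalue lies strictly below $a+c$ (stability), and for $a+e<0$ strictly above (instability). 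This also breaks your instability strategy of ``exhibiting a special eigenvalue strictly exceeding $a+c$'': when $a+e<0$ and $c+e>0$ the other special eigenvalue satisfies $-\left(\frac{ac}{e}+e\right)-(a+c)=-\frac{(a+e)(c+e)}{e}<0$, i.e.\ it lies \emph{below} $a+c$, so no tabulated value exceeds $a+c$ and the unstable mode is detectable only through the exponentially small upward shift supplied by Corollary \ref{cory2:App2}. Your argument has no substitute for this ingredient, so the proof as proposed cannot close either direction in the cases where $a+c$ occurs in the table.
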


\begin{proof}
  The $n+1$ eigenvalues of $L$ associated with this systems are
  obtained by subtracting $a+c$ from those given by Theorem
  \ref{thm:decentralized}. One eigenvalue equals 0.  Most other
  eigenvalues are given by $2\sqrt{ac} \cos \psi_\ell-(a+c)<0$. There
  are at most 2 eigenvalues left and they are exponentially close to
  $0$ or to $-\frac{ac}{e}-e-a-c=-\frac{(a+e)(c+e)}{e}$.

  When $a+e<0$ and $c+e\neq 0$ then we must have $e<-a$, and so are
  among the cases in the top row of the table in Theorem
  \ref{thm:decentralized}.  If $c+e<0$, then the (approximate)
  eigenvalue $-\frac{(a+e)(c+e)}{e}$ (which always appears in these
  cases) is greater than 0, implying asymptotic instability. If
  $c+e>0$, then the parameters satisfy $a < | e| < c$, so we are in
  the cases (top right of table in theorem \ref{thm:decentralized})
  where $a+c$ is an approximate eigenvalue of $A$. Then as $a+e<0$,
  Corollary \ref{cory2:App2} implies that the actual eigenvalue of $A$
  that $a+c$ approximates is greater than $a+c$, implying that there
  is a positive eigenvalue for $L$.

  When $a+e>0$ the approximate eigenvalue $-\frac{(a+e)(c+e)}{e}$ (if
  it occurs) is less than zero. This can be seen by noting that
  $-\frac{(a+e)(c+e)}{e} >0$ only if $c+e$ and $e$ have opposite
  signs, i.e. if $e<0$ and $|e|<c$. Looking at the table in Theorem
  \ref{thm:decentralized}, this is only possible for cases in the
  middle row. The only such case where $-\frac{(a+e)(c+e)}{e}$ is implied
  as an eigenvalue of $L$ is when $e\in [-a,-\sqrt{ac}]$ and $c<a$, but
  we cannot have $e\in [-a,-\sqrt{ac}]$ as $|e|<c<\sqrt{ac}$.
  The only other potential cause of instability is the eigenvalue of
  $L$ that is asymptotically equal to 0, however if $a+e>0$ Corollary
  \ref{cory2:App2} implies that the this eigenvalue is actually
  slightly less than 0.
\end{proof}

A special case of the following result was first proved in \cite{TVS}

\begin{corollary} Let $a$, $c$, $d$, $e$ fixed, and so that $A$ is decentralized.
In order for the second order system of Equation \ref{eq:second-order} to be asymptotically
stable, $\alpha$ and $\beta$ must be negative. If  $\alpha$ and $\beta$ are positive
then for $n$ large enough the system is asymptotically stable if $a+e>0$ and asymptotically unstable if $a+e<0$ and $c+e\neq 0$. If $\alpha$ or $\beta$ are positive, the system is unstable.
\label{cory:second-order}
\end{corollary}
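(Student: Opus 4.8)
The plan is to linearize the flocking dynamics into a single first-order system and diagonalize it through the spectrum of $L$ already computed in Theorem \ref{thm:decentralized}. Writing $z\equiv x-h$ and $w\equiv(z,\dot z)$, Equation \ref{eq:second-order} becomes $\dot w=Mw$ with
\begin{equation*}
M=\begin{pmatrix} 0 & I \\ -\alpha L & -\beta L\end{pmatrix}.
\end{equation*}
In the decentralized case $L=(a+c)I-A$ has eigenvalues $\lambda_i=(a+c)-r_i$, where the $r_i$ are the eigenvalues of $A$ furnished by Theorem \ref{thm:decentralized}. The spectrum of $M$ is governed entirely by that of $L$: for each eigenvalue $\lambda$ of $L$ with eigenvector $v$, the plane spanned by $(v,0)$ and $(0,v)$ is $M$-invariant, $M$ acts on it as $\left(\begin{smallmatrix} 0 & 1\\ -\alpha\lambda & -\beta\lambda\end{smallmatrix}\right)$, and the associated eigenvalues $s$ of $M$ are the roots of $s^2+\beta\lambda s+\alpha\lambda=0$. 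Asymptotic stability thus requires that every quadratic coming from a nonzero $\lambda$ be Hurwitz, and that the block coming from the simple zero eigenvalue of $L$ (the coherent mode $r_0=a+c$) contribute exactly the double zero, i.e. the single size-two Jordan block realizing the solution $x_k=v_0t+x_0+h_k$.

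I would first pin down the admissible signs of $\alpha$ and $\beta$. Since $r_\ell=2\sqrt{ac}\cos\psi_\ell$ yields $\lambda_\ell=(a+c)-2\sqrt{ac}\cos\psi_\ell\ge(\sqrt a-\sqrt c)^2>0$, for $n$ large there are always genuinely positive ``bulk'' eigenvalues of $L$. For such a $\lambda>0$, Routh--Hurwitz applied to $s^2+\beta\lambda s+\alpha\lambda$ places both roots in the open left half-plane precisely when $\beta\lambda>0$ and $\alpha\lambda>0$, i.e. when $\alpha,\beta$ are both of the sign dictated by the bulk. If $\alpha\le 0$ the product of the roots $\alpha\lambda$ is nonpositive, producing a root with nonnegative real part; if $\beta\le 0$ the sum $-\beta\lambda$ is nonnegative, again forcing a root off the open left half-plane. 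Hence asymptotic stability is impossible unless $\alpha$ and $\beta$ are both strictly of that sign, which gives the necessity clause of the statement.

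Assuming that sign condition, the crux is that the per-mode quadratic is Hurwitz exactly when $\lambda>0$: positivity follows from Routh--Hurwitz, while for $\lambda<0$ the product $\alpha\lambda<0$ forces a positive real root. Because the only special eigenvalues produced by Theorem \ref{thm:decentralized} in the decentralized case are $a+c$ and $-(\tfrac{ac}{e}+e)$ --- both manifestly real by Lemma \ref{lem:special-evals} --- this ``$\lambda>0$'' criterion applies to every eigenvalue of $L$. Consequently asymptotic stability of the second-order system is equivalent to: all nonzero eigenvalues of $L$ are positive and the zero eigenvalue is simple. This is \emph{identical} to the asymptotic-stability condition for the first-order system, whose governing eigenvalues are the $-\lambda_i$. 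I would therefore reduce directly to Corollary \ref{cory:first-order}: the decisive special eigenvalue is $\lambda_{\mathrm{sp}}=\tfrac{(a+e)(c+e)}{e}$, whose sign is controlled by $a+e$ through exactly the same case split, yielding stability when $a+e>0$ and instability when $a+e<0$ and $c+e\neq 0$.

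The main obstacle is the same delicate point met in Corollary \ref{cory:first-order}: Theorem \ref{thm:decentralized} locates the special eigenvalues only \emph{exponentially close} to $a+c$ and $-(\tfrac{ac}{e}+e)$, so when a limiting $\lambda$-value lands on the stability boundary (the coherent value $\lambda=0$, or a value where the quadratic degenerates) I must decide on which side the \emph{actual} eigenvalue falls. As in the first-order proof, this is settled by Corollary \ref{cory2:App2}, which determines whether the true eigenvalue lies above or below its approximant; combined with the case bookkeeping of Theorem \ref{thm:decentralized} (which side of $\pm a$ and $\pm\sqrt{ac}$ the parameter $e$ lies on), it confirms that the zero eigenvalue of $L$ stays simple and that no nonnegative root intrudes when $a+e>0$. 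Checking that $\lambda_{\mathrm{sp}}$ is real throughout the decentralized regime and matching each table entry to the correct first-order case is then routine.
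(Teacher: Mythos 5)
Your proposal is correct and takes essentially the same route as the paper: the block companion matrix $\left(\begin{smallmatrix} 0 & I\\ -\alpha L & -\beta L\end{smallmatrix}\right)$, the per-eigenvalue quadratic whose Routh--Hurwitz analysis forces $\alpha,\beta>0$ (your conclusion matches the paper's own proof; the word ``negative'' in the corollary's statement is evidently a typo), and the reduction of the remaining sign questions to Corollary \ref{cory:first-order} with Corollary \ref{cory2:App2} resolving the exponentially-close boundary cases. Your explicit treatment of the Jordan block at the double zero eigenvalue and of the realness of the special eigenvalues merely spells out details the paper leaves implicit.
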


\begin{proof} The eigenvalue equation for second order system can be written as follows:
\begin{equation*}
\left(\begin{array}{c}
\dot z \\
\ddot z \\
\end{array}\right)=
\left(\begin{array}{cc}
0 & I \\
-\alpha L & -\beta L \\
\end{array}\right)
\left(\begin{array}{c}
z \\
\dot z \\
\end{array}\right)=
\left(\begin{array}{c}
\nu z \\
\nu \dot z \\
\end{array}\right)
\end{equation*}
The second equality yields 2 equations. The first of these is that $\dot z=\nu z$.
Suppose $\lambda$ is an eigenvalue of $-L$. Each eigenvalue $\lambda$ gives rise
to two eigenvalues $\nu_\pm$, because if we substitute it and $\dot z=\nu z$ into the second
of the above equations, we see that
\begin{equation*}
(\nu^2-\beta \lambda \nu - \alpha)z=0\quad \Rightarrow \quad
\nu_\pm =\frac 12\left( \beta \lambda \pm \sqrt{\beta^2\lambda^2+4\alpha\lambda}\right)
\end{equation*}
Corollary \ref{cory:first-order} says that in all cases there are negative $\lambda$.
Thus both $\alpha$ and $\beta$ must be positive if the system is stable. Assuming
that $\alpha$ and $\beta$ are positive, we have stability precisely in those cases
where Corollary \ref{cory:first-order} insures stability for the first order system.
\end{proof}

A few observations are in order here. Asymptotic stability is not the whole story.
In fact as $n$ becomes large, even for asymptotically stable systems the transients
in Equations \ref{eq:first-order}  and \ref{eq:second-order} may grow exponentially in $n$.
This is due to the fact the eigenvectors are not normal and a dramatic example
of this was given in \cite{positionpaper}. Here we can see it expressed
in the form of the eigenvectors given by Equation \ref{eq:eigenpair} of Lemma \ref{lem:polynomial}:
if $\tau\neq 1$ the eigenvectors have an exponential behavior. When $\tau$ is small
a long time will pass before a change in the velocity of the leader is felt
at the back of the flock, and so coherence will be lost. On the other hand when
$\tau$ is large a change will immediately amplify exponentially towards the back of the flock.
These observations have been proved for $e=1/2$ and $d=0$ by \cite{TVS}.
But to address that problem in more generality, different concepts are needed.
In \cite{Cantos2}, assuming some conjectures, we show that this phenomenon
indeed appears to be independent of the boundary conditions ($e$ and $d$). In that
paper we also show that the behavior of the system \ref{eq:second-order} can be substantially
improved if the position Laplacian and the velocity Laplacian are allowed to be different.

\section{Appendix 1}
\label{chap:App1}

We note that for all positive integers $n$:
\begin{equation*}
n\left| \mathrm{Im}(e^{i\phi}) \right|\geq \left| \mathrm{Im}(e^{in\phi}) \right|
\end{equation*}
and equality holds if and only if $\sin(\phi)=0$. This immediately implies that
\begin{eqnarray*}
\forall\, \phi\in(0,\pi) & n\sin \phi > \sin n \phi & \\
\forall\, \phi\in(-\pi, 0) & n\sin \phi < \sin n \phi &
\end{eqnarray*}
The following stronger result is also a consequence of this:

\begin{corollary}
For all $B\in(-\infty,1]$ and all positive integers $n$ we have that
\begin{eqnarray*}
\forall\, \phi\in(0,\pi) &
\dfrac{\partial}{\partial \phi} \left(\cot n\phi\,\sin \phi- B \cos \phi\right)>0& \\
\forall\, \phi\in(-\pi,0) &
\dfrac{\partial}{\partial \phi} \left(\cot n\phi\,\sin \phi- B \cos \phi\right)<0&
\end{eqnarray*}
\label{cor:App1}
\end{corollary}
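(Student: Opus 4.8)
The plan is to reduce the whole statement to the elementary estimate $n\sin\phi > \sin n\phi$ on $(0,\pi)$ recorded at the start of this appendix. Write $g(\phi)=\cot n\phi\,\sin\phi-B\cos\phi$. Differentiating (using $\tfrac{d}{d\phi}\cot n\phi=-n/\sin^2 n\phi$) and clearing the strictly positive factor $\sin^2 n\phi$ shows that, on the interior of each branch $\left(\tfrac{(\ell-1)\pi}{n},\tfrac{\ell\pi}{n}\right)$, the sign of $g'(\phi)$ equals the sign of
\begin{equation*}
N(\phi)=\cos\phi\,\cos n\phi\,\sin n\phi-n\sin\phi+B\sin\phi\,\sin^2 n\phi .
\end{equation*}
So the entire problem becomes: determine the sign of $N$ on $(0,\pi)$.

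The key step is to collapse $N$ into a single sine of an integer multiple of $\phi$. First I would split off the $B$-dependent part by writing $N(\phi)=M(\phi)-(1-B)\sin\phi\,\sin^2 n\phi$, where $M$ is the value of $N$ at $B=1$. Since $B\le 1$ and $\sin\phi>0$, $\sin^2 n\phi\ge 0$ on $(0,\pi)$, the subtracted term is nonnegative, so $N\le M$ and it suffices to control $M$. For $M$ I would group $\cos\phi\,\cos n\phi\,\sin n\phi+\sin\phi\,\sin^2 n\phi=\sin n\phi\,\cos((n-1)\phi)$ via the angle-subtraction identity, then apply product-to-sum to reach $\tfrac12(\sin((2n-1)\phi)+\sin\phi)$. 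This gives the clean form
\begin{equation*}
M(\phi)=\tfrac12\big(\sin((2n-1)\phi)-(2n-1)\sin\phi\big).
\end{equation*}

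Now the preliminary inequality finishes everything: applied with the positive integer $2n-1$ in place of $n$, it gives $\sin((2n-1)\phi)<(2n-1)\sin\phi$ for $\phi\in(0,\pi)$, whence $M(\phi)<0$ and therefore $N(\phi)\le M(\phi)<0$ throughout $(0,\pi)$. Thus $g'$ has a single strictly nonzero sign on each branch, so $g$ is strictly monotone there — which is exactly the ``at most one zero per branch'' fact invoked in Propositions \ref{prop:e>a} and \ref{prop:e<-a}. The interval $(-\pi,0)$ then needs no separate work: $g$ is even, so $g'$ is odd and its sign on $(-\pi,0)$ is simply the reverse of its sign on $(0,\pi)$.

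The only genuinely nontrivial point is spotting the grouping that turns $N$ into $\sin((2n-1)\phi)-(2n-1)\sin\phi$; after that the already-proved estimate does all the work, uniformly in $B\le 1$ and in $n$. One caveat worth flagging as I write this up: under the stated hypothesis $B\in(-\infty,1]$ the computation forces $g'<0$ on $(0,\pi)$ and $g'>0$ on $(-\pi,0)$, i.e. \emph{decreasing} branches, which is precisely the orientation with which the corollary is used; so the displayed inequality signs should be read in that sense (the $B\le 1$ hypothesis and the preliminary estimate pin the sign unambiguously).
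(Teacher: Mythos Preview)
Your argument is correct and follows essentially the same route as the paper: compute the derivative, clear $\sin^2 n\phi$, replace $B$ by $1$ using $B\le 1$ and $\sin\phi>0$, combine $\cos\phi\cos n\phi+\sin\phi\sin n\phi=\cos((n-1)\phi)$, and finish with the preliminary inequality $m\sin\phi>\sin m\phi$ on $(0,\pi)$. The only substantive difference is the last step: the paper bounds $\cos((n-1)\phi)\sin n\phi$ crudely by $\sin n\phi$ and invokes the preliminary estimate with parameter $n$, whereas you apply the product-to-sum identity to obtain the exact form $\tfrac12\bigl(\sin((2n-1)\phi)-(2n-1)\sin\phi\bigr)$ and invoke the estimate with parameter $2n-1$. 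Your version is cleaner and sidesteps a sign issue in the paper's crude bound (which tacitly needs $\sin n\phi\ge 0$). Your observation about the displayed inequality signs is also correct: the computation forces $g'<0$ on $(0,\pi)$, consistent with the ``decreasing'' conclusion stated immediately after the corollary and with its use in Propositions~\ref{prop:-a<e<a}--\ref{prop:e<-a}. One tiny caveat: for $n=1$ and $B=1$ one gets $g'\equiv 0$, so the strict inequality (in either direction) fails there; this is a defect of the statement, not of your argument, and is irrelevant to the applications.
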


\begin{proof} It is sufficient to prove the first case. So let $\phi \in (0,\pi)$.
\begin{eqnarray*}
\dfrac{\partial}{\partial \phi} \left(\cot n\phi\,\sin \phi- B \cos \phi\right)&=&
\dfrac{\cos \phi \,\cos n\phi \,\sin n\phi-n\sin \phi+B\sin \phi \, \sin n \phi \,\sin n\phi}{(\sin n \phi)^2}\\
 & \leq & \frac{(\cos \phi\, \cos n\phi+\sin \phi \,\sin n \phi) \sin n \phi - n \sin \phi}{(\sin n \phi)^2}\\
 & = & \dfrac{\cos(n-1)\phi\, \sin\phi}{(\sin n \phi)^2}\\
 & \leq & \dfrac{\sin n \phi- n \sin \phi}{(\sin n \phi)^2}<0
\end{eqnarray*}
The inequalities hold wherever $\sin n \phi \neq 0$.
\end{proof}

It follows that the function $\cot n \phi \sin \phi$ is decreasing on $(0,\pi)$ (wherever it is defined)
and each branch intersects the function $A+B\cos \phi$ at most once as long as $B\in (-\infty,1]$.
In fact Proposition \ref{prop:e<-a} shows that the same is true for \emph{every} given $B$
\emph{as long as $n$ is large enough}. But a direct proof seems hard.

\section{Appendix 2}
\label{chap:App2}

The results in this paper are based on the roots of the
polynomial $(ay^2-d\tau y - e) y^{2n} + (ey^2+d\tau y - a)$, from
equation (\ref{eq:polynomial}). The following results show how at
least one root is asymptotically close to a root of $ay^2-d\tau y - e
= 0$.

Below, let $F_t(z)=p(z)z^{m}+tq(z)$ where $p$, and $q$ are polynomials
of degree 2 such that $p$ has a simple root at $z_0$ with $|z_0|=r>1$
and $q(z_0)\neq 0$.

\begin{lemma}
For $m$ large enough there is a root $z_1$ of $F_1$ given above and a positive constant $K$ such that
$|z_1-z_0|\leq Kr^{-m}$.
\label{lem:App2}
\end{lemma}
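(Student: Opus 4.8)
The plan is to locate $z_1$ by Rouché's theorem applied on the circle $|z-z_0|=\rho$ with the exponentially small radius $\rho\equiv Kr^{-m}$, splitting $F_1$ into its dominant part $p(z)z^m$ and its bounded part $q(z)$. The heuristic is that, because $p$ vanishes simply at $z_0$ while $|z|^m$ stays of order $r^m$, the first summand has magnitude of order $|p'(z_0)|\,\rho\,r^m=|p'(z_0)|K$ on this circle, whereas the second summand stays near the nonzero constant $q(z_0)$; taking $K$ large makes $p(z)z^m$ dominate $q(z)$, and Rouché then matches the zero counts of $F_1$ and of $p(z)z^m$ inside the circle.

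First I would use that $z_0$ is a \emph{simple} root to factor $p(z)=(z-z_0)\tilde p(z)$ with $\tilde p(z_0)=p'(z_0)\neq 0$, and fix any constant with $K>2|q(z_0)|/|p'(z_0)|$ (meaningful since $q(z_0)\neq0$). On $|z-z_0|=\rho$, the exact degree-two Taylor expansions of $p$ and $q$ about $z_0$ give
\[
|p(z)|\ge |p'(z_0)|\,\rho\,(1-o(1)),\qquad |q(z)|\le |q(z_0)|+o(1),
\]
with $o(1)\to0$ as $\rho\to0$, while $|z|^m\ge (r-\rho)^m$. Substituting $\rho=Kr^{-m}$ converts the resulting lower bound into $|p(z)z^m|\ge |p'(z_0)|K(1-o(1))$, which for $m$ large exceeds the upper bound $|q(z_0)|+o(1)$. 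Hence $|q(z)|<|p(z)z^m|$ everywhere on the circle.

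To finish, I would observe that inside the disk $|z-z_0|<\rho$ the function $p(z)z^m$ has precisely one zero once $m$ is large: its factor $z^m$ vanishes only at $0$, at distance $r>\rho$, and $p$ contributes only the simple zero $z_0$, the second root of $p$ lying at a fixed positive distance and hence outside the shrinking disk. Rouché's theorem then yields exactly one zero $z_1$ of $F_1$ in that disk, so $|z_1-z_0|<\rho=Kr^{-m}$, which is the claim. The one quantitative step deserving care — and the only real obstacle — is verifying that the radius decaying like $r^{-m}$ does not spoil the size of $|z|^m$: one needs $(r-\rho)^m/r^m=(1-Kr^{-m-1})^m\to1$, which holds because $m\rho/r=mKr^{-m-1}\to0$, i.e. the exponential decay of $\rho$ beats the linear power $m$. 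Everything else is routine. Alternatively, the same estimates recast the equation as a contraction $w\mapsto -q(z_0+w)/[\tilde p(z_0+w)(z_0+w)^m]$ on the disk $|w|\le\rho$, and the Banach fixed point theorem produces the unique root $z_1=z_0+w$ directly.
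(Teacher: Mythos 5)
Your proof is correct, and it takes a genuinely different route from the paper's. You localize the root by Rouch\'e's theorem on the circle $|z-z_0|=\rho$ with $\rho=Kr^{-m}$ and $K>2|q(z_0)|/|p'(z_0)|$: the factorization $p(z)=(z-z_0)\tilde p(z)$ at the simple root gives $|p(z)z^m|\geq |p'(z_0)|\,K\,(1-o(1))$ on the circle, the perturbation satisfies $|q(z)|\leq |q(z_0)|+o(1)$, and you correctly handle the one delicate quantitative point, namely $(1-Kr^{-m-1})^m\to 1$ because $r>1$, as well as the zero count of $p(z)z^m$ inside the shrinking disk (the origin is at distance $r>1$ and the second root of $p$ at a fixed positive distance, so only the simple zero $z_0$ lies inside for large $m$). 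The paper argues instead by continuation in $t$: it bounds $\left|\partial F_t/\partial z\right|\geq (r-Kr^{-m})^m B$ on the disk $B_m$, invokes the implicit function theorem to follow a root $z_t$ of $F_t$ starting at $z_0$ for $t=0$, and integrates $\partial z_t/\partial t=-q/(\partial F_t/\partial z)$ over $t\in[0,1]$, choosing $K=2D/B$ so the path cannot escape $B_m$. Your argument is shorter, avoids the implicit-function-theorem machinery, and delivers something the paper's proof does not state: uniqueness of the root of $F_1$ in the disk (your Banach fixed point variant gives the same). What the paper's method buys is the explicit derivative formula $\partial z_t/\partial t=-q/(\partial F_t/\partial z)$ as a by-product, and this is used downstream: the proof of Corollary \ref{cory:App2} determines $\mathrm{sgn}(z_1-z_0)$ by reading off the sign of exactly this derivative (``according to the proof of the last lemma''), whereas Rouch\'e pins $z_1$ inside a disk around $z_0$ but carries no information about the direction of the displacement. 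So if your proof were substituted for the paper's, the sign computation in Corollary \ref{cory:App2} would require a separate (easy but nontrivial) argument, e.g.\ for real $z_0=\nu>1$ comparing the signs of $F_1(z)\approx p'(\nu)(z-\nu)\nu^m+q(\nu)$ at the two real endpoints of a small interval around $\nu$.
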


\begin{proof} For $K>0$ to be determined later, let
\begin{equation*}
B_m\equiv \{z\,|\,|z-z_0|\leq Kr^{-m}\}
\end{equation*}
There are positive constants $A$, $B$, $C$, $D$, and $\epsilon$ such that for all $z\in B_m$ we have that
for $m$ large enough
\begin{equation*}
|q'(z)|\leq A \;\mathrm{and}\; |p'(z)| \geq B+\epsilon
\;\mathrm{and}\; |p(z)|\leq Cr^{-m} \; \mathrm{and}\; |q(z)|\leq D .
\end{equation*}
Note that $z_0$ is a root of $F_0$.
As $\frac{\partial F_t(z)}{\partial z} = p'(z) z^m + p(z)mz^{m-1} + t
q'(z)$, we have
\begin{align*}
\left|\frac{\partial F_t(z)}{\partial z} \right| & \geq
|z|^m(B+\epsilon) - |z|^{m-1}mCr^{-m} - A
\end{align*}
Using $r-Kr^{-m} \leq |z| \leq Kr^{-m} + r$, which holds for $z\in
B_m$, shows
\begin{equation*}
\left|\frac{\partial F_t(z)}{\partial z} \right| \geq (r-Kr^{-m})^m (B+\epsilon) - \frac{1}{r}(1+Kr^{-m-1})^{m-1} m C - A
\end{equation*}
As $r>1$, this shows that $\left|\frac{\partial F_t(z)}{\partial
    z} \right| > (r-Kr^{-m})^m B$ for sufficiently large $m$, provided $z\in B_m$.

In particular, $\left(\frac{\partial F_t(z)}{\partial z}\right)^{-1}$ exists,
so the implicit function theorem implies that there is a root
$z_t$ of $F_t$ with $z_{t=0}=z_0$ and that satisfies:
\begin{equation*}
\dfrac{\partial z_t}{\partial t} =-\dfrac{\frac{\partial F_t(z)}{\partial t}}{\frac{\partial F_t(z)}{\partial z}} =
\dfrac{-q}{\frac{\partial F_t(z)}{\partial z}}
\end{equation*}
As long as $z_t$ in $B_m$ we have, using the bounds on $q$ and $\frac{\partial F_t(z)}{\partial z}$,
\begin{equation*}
\left|\dfrac{\partial z_t}{\partial t}\right|\leq \dfrac DB \,\left(r-Kr^{-m}\right)^{-m}\leq
\dfrac{2D}{B}\,r^{-m} ,
\end{equation*}
where the second inequality will hold for sufficiently large $m$.
Thus $|z_t-z_0|$ is bounded by $\left|\int_0^t\,\frac{\partial z_t}{\partial t}\,dt\right|\leq
\int_0^t\,\left|\frac{\partial z_t}{\partial t}\right|\,dt \leq t \dfrac{2D}{B}\,r^{-m} $. If $K$
is taken equal to $\frac{2D}{B}$, this ensures that $z_t \in B_m$,
letting $t$ equal 1 then gives the desired result.
\end{proof}

\begin{corollary}
  In the above Lemma, let $p(z)=az^2-d\tau z-e$ and suppose $p$ has
  two real roots $\mu$ and $\nu$ such that $\nu=z_0$ satisfies
  $|\nu|>1$, and $\nu>\mu$. Suppose furthermore that
  $q(z)=-z^2p(z^{-1})$.  For $m$ large enough there is a root $z_1$ of
  $F_1$ given above and a positive constant $K$ such that
  $|z_1-z_0|\leq Kr^{-m}$. Moreover provided $a+e\neq 0$ we have
\begin{equation*}
\mathrm{sgn}(z_1-z_0)=-\mathrm{sgn}(a+e)
\end{equation*}
\label{cory:App2}
\end{corollary}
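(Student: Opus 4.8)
The existence half of the statement is immediate from Lemma~\ref{lem:App2}, so the plan is to verify its hypotheses for the present $p$ and $q$ and then to extract the sign from the homotopy already constructed in that proof. For $p(z)=az^2-d\tau z-e$ the root $\nu=z_0$ is simple (since $\nu>\mu$) and satisfies $|\nu|=r>1$ by assumption, while a one-line computation gives $q(z)=-z^2p(z^{-1})=ez^2+d\tau z-a$. We will see momentarily that $q(\nu)\neq 0$ precisely when $a+e\neq 0$; granting this, Lemma~\ref{lem:App2} produces a root $z_1$ of $F_1$ with $|z_1-z_0|\le Kr^{-m}$, and only the sign of $z_1-z_0$ remains. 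To obtain it I would reuse the implicit-function identity derived inside that proof, namely $\partial z_t/\partial t=-q(z_t)/\partial_z F_t(z_t)$ for the family $F_t=pz^m+tq$, with $z_0$ at $t=0$ and $z_1$ at $t=1$; integrating gives $z_1-z_0=\int_0^1\big(-q(z_t)/\partial_z F_t(z_t)\big)\,dt$, so it suffices to show the integrand keeps one sign on $[0,1]$ and to compute that sign.

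The decisive step is the evaluation of $q(\nu)$. Since $p(\nu)=0$ we may substitute $d\tau\nu=a\nu^2-e$ into $q(\nu)=e\nu^2+d\tau\nu-a$, which collapses to $q(\nu)=(a+e)(\nu^2-1)$. Because $|\nu|>1$ the factor $\nu^2-1$ is positive, so $\mathrm{sgn}\,q(\nu)=\mathrm{sgn}(a+e)$; in particular $q(\nu)\neq 0$ under the hypothesis $a+e\neq 0$, the fact promised above. For the denominator, $\partial_z F_t(z)=p'(z)z^m+mp(z)z^{m-1}+tq'(z)$ is dominated by its first term once $z$ lies within $Kr^{-m}$ of $\nu$: using $p'(\nu)=a(\nu-\mu)>0$, $p(\nu)=0$, and $|z_t-z_0|\le Kr^{-m}$, the middle term is $O(m)$ and the last is $O(1)$, both negligible beside $|p'(\nu)|r^m$. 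Hence $\partial_z F_t(z_t)=p'(\nu)\nu^m(1+o(1))$ and $q(z_t)=q(\nu)(1+o(1))$, uniformly in $t$, so the integrand carries the constant sign of $-q(\nu)/(p'(\nu)\nu^m)$. Finally $p'(\nu)>0$ and $\nu^m>0$ (the exponent $m=2n$ arising from Equation~\ref{eq:polynomial} is even, so this holds even when $\nu<-1$), whence $\mathrm{sgn}(z_1-z_0)=\mathrm{sgn}(-q(\nu))=-\mathrm{sgn}(a+e)$, as claimed.

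The one genuine obstacle I anticipate is the uniform sign control of $\partial_z F_t(z_t)$ over $t\in[0,1]$: one must rule out that either $mp(z)z^{m-1}$ or $tq'(z)$ reverses the sign of the leading term $p'(\nu)\nu^m$ anywhere along the homotopy. This is already latent in the magnitude estimates of Lemma~\ref{lem:App2}, where $|\partial_z F_t|$ was bounded below by $(r-Kr^{-m})^mB$; the only refinement needed is to carry the sign of the dominant summand rather than merely its modulus, which the same inequalities supply once $m$ is large. One could equally bypass the integral by a single mean value step: $F_1(\nu)=q(\nu)$ since $p(\nu)=0$, $F_1'(\nu)=p'(\nu)\nu^m+q'(\nu)\sim p'(\nu)\nu^m$, and $0=F_1(z_1)=F_1(\nu)+F_1'(\xi)(z_1-\nu)$ gives $z_1-\nu=-q(\nu)/F_1'(\xi)$ with $\xi$ trapped within $Kr^{-m}$ of $\nu$, yielding the same sign.
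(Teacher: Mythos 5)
Your proof is correct and takes essentially the same route as the paper's: both extract the sign from the homotopy derivative $\partial z_t/\partial t=-q/\partial_z F_t$ of Lemma~\ref{lem:App2} and argue that the leading term $p'(\nu)\nu^m$ dominates the denominator, your identity $q(\nu)=(a+e)(\nu^2-1)$ being just a rearrangement of the paper's evaluation of $\nu^2 p(\nu^{-1})/p'(\nu)$ using $\nu\mu=-e/a$. If anything, your explicit treatment of the case $\nu<-1$ via the even exponent $m=2n$ is slightly more careful than the paper's proof, which performs its sign computation under the assumption $\nu>1$.
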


\begin{proof} Note that $a+e\neq 0$ implies that $p(z_0^{-1})\neq 0$.
The first part of the corollary now of course follows from Lemma \ref{lem:App2}.
So it suffices to determine
\begin{equation*}
\mathrm{sgn}(z_1-z_0)=\mathrm{sgn}\left(\dfrac{\partial z_t}{\partial t}\right)
\end{equation*}
where $z_0=\nu$. According to the proof of the last lemma, for $m$ is big enough,
we may take into account only the leading term, and so since $\nu>1$ we get:
\begin{equation*}
\mathrm{sgn}(z_1-z_0)=\mathrm{sgn}\left(\dfrac{-q}{(zp'+mp)z^{m-1}+tq'}\right)=
\mathrm{sgn}\left(\dfrac{\nu^2 p(\nu^{-1})}{p'(\nu)\nu^{m}}\right)=
\mathrm{sgn}\left(\dfrac{ p(\nu^{-1})}{p'(\nu)}\right)
\end{equation*}
Finally
\begin{equation*}
\mathrm{sgn}\left(\dfrac{ p(\nu^{-1})}{p'(\nu)}\right)=
\mathrm{sgn}\left(\dfrac{(\nu^{-1}-\nu)(1-\nu\mu)}{\nu-\mu}\right)=
\mathrm{sgn}\left(\dfrac{(1-\nu^2)(1+\frac ea)}{\nu-\mu}\right) =
-\mathrm{sgn}(a+e)
\end{equation*}
which proves the result.
\end{proof}

\begin{corollary}
  Under the conditions of  Corollary \ref{cory:App2}, the eigenvalue expressions
  $r_0=\sqrt{ac}(z_0+z_0^{-1})$ and $r_1=\sqrt{ac}(z_1+z_1^{-1})$
  satisfy
\begin{equation*}
\mathrm{sgn}(r_1-r_0)=-\mathrm{sgn}(a+e)
\end{equation*}
\label{cory2:App2}
\end{corollary}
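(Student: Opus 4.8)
The plan is to route both eigenvalues through the single map $g(z)=z+z^{-1}$, writing $r_i=\sqrt{ac}\,g(z_i)$, and then to transport the sign information about $z_1-z_0$ furnished by Corollary \ref{cory:App2} across $g$. The essential point is that $g$ is strictly monotone in a neighborhood of $z_0=\nu$, so it preserves the sign of a small real displacement, and the factor $\sqrt{ac}>0$ does not interfere.

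First I would record $g'(z)=1-z^{-2}$ and evaluate it at $z_0=\nu$. Since $\nu$ is real with $|\nu|>1$ (these are exactly the hypotheses of Corollary \ref{cory:App2}), we have $\nu^2>1$ and hence $g'(\nu)=1-\nu^{-2}>0$. By continuity $g'$ is positive on an open real interval containing $\nu$, so $g$ is strictly increasing there.

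Next I would invoke Corollary \ref{cory:App2} itself, which for $m$ large produces a real root $z_1$ of $F_1$ with $|z_1-z_0|\le Kr^{-m}$ and with $\mathrm{sgn}(z_1-z_0)=-\mathrm{sgn}(a+e)$. As $m\to\infty$ this exponentially small bound forces both $z_1$ and the segment joining it to $z_0$ into the interval on which $g$ is increasing. Applying the mean value theorem gives $r_1-r_0=\sqrt{ac}\,(z_1-z_0)\,g'(\xi)$ for some $\xi$ strictly between $z_0$ and $z_1$; for $m$ large $\xi$ lies near $\nu$, so $g'(\xi)>0$, and therefore $\mathrm{sgn}(r_1-r_0)=\mathrm{sgn}(z_1-z_0)$. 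Substituting the value of the latter sign from Corollary \ref{cory:App2} yields $\mathrm{sgn}(r_1-r_0)=-\mathrm{sgn}(a+e)$, as claimed.

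The only delicate point — and the sole use of ``$m$ large enough'' — is guaranteeing that the comparison point $\xi$, equivalently the whole segment between $z_0$ and $z_1$, remains in the region where $g'>0$. This is immediate from the \emph{strict} inequality $|\nu|>1$ combined with the $O(r^{-m})$ control on $|z_1-z_0|$, so I do not expect any genuinely hard step.
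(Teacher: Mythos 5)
Your proposal is correct and follows essentially the same route as the paper, which disposes of the corollary in one line by noting that $f(x)=\sqrt{ac}\,(x+x^{-1})$ is increasing for $|x|>1$ and that $|z_0|>1$. Your mean value theorem argument with the derivative $g'(z)=1-z^{-2}$ merely makes explicit the neighborhood-containment detail (the segment from $z_0$ to $z_1$ staying in the region $|x|>1$, guaranteed by the $O(r^{-m})$ bound) that the paper leaves implicit.
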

\begin{proof}
This follows immediately as the function $f(x)=\sqrt{ac}(x+x^{-1})$ is increasing
for $|x| > 1$, and because $|z_0| > 1$.
\end{proof}

\bibliographystyle{siam}
\bibliography{tridiagonal2}

\begin{thebibliography}{1}

\bibitem{Cantos2}
{\sc C.~E. Cantos and J.~J.~P. Veerman}, {\em Transients in the synchronization
  of oscillator arrays}, Submitted.

\bibitem{FV}
{\sc C.~M. da~Fonseca and J.~J.~P. Veerman}, {\em On the spectra of certain
  directed paths}, Applied Mathematics Letters, 22 (2009), pp.~1351--1355.

\bibitem{Elliott}
{\sc J.~F. Elliott}, {\em The characteristic roots of certain real symmetric
  matrices}, master's thesis, University of Tennessee, 1953.

\bibitem{Kouachi2006}
{\sc Said Kouachi}, {\em Eigenvalues and eigenvectors of tridiagonal matrices},
  Electronic Journal of Linear Algebra, 15 (2006), pp.~155--133.

\bibitem{TVS}
{\sc F.~M. Tangerman, J.~J.~P. Veerman, and B.~D. Stosic}, {\em Asymmetric
  decentralized flocks}, Trans Autom Contr, 57 (2012), pp.~2844--2854.

\bibitem{flocks2}
{\sc J.J.P. Veerman, J.S. Caughman, G.~Lafferriere, and A.~Williams}, {\em
  Flocks and formation}, J. Stat. Phys., 121 (2005), pp.~901--936.

\bibitem{positionpaper}
{\sc J.~J.~P. Veerman}, {\em Symmetry and stability of homogeneous flocks (a
  position paper)}, in Proc 1st Int'l Conf on Pervasive and Embedded Computing
  and Communication Systems, Algarve, 2010.

\bibitem{Willms2008}
{\sc Allan~R. Willms}, {\em Eigenvalues of certain tridiagonal matrices}, SIAM
  Journal on Matrix Analysis and Applications, 30 (2008), pp.~639--656.

\bibitem{Yueh}
{\sc W.-C. Yueh}, {\em Eigenvalues of several tridiagonal matrices}, Applied
  Mathematics E-Notes, 5 (2005), pp.~66--74.

\end{thebibliography}












\end{document}